\tikzset{snake it/.style={decorate, decoration=snake}}
\tikzstyle{edge}=[very thick]
\definecolor{bostonuniversityred}{rgb}{0.8, 0.0, 0.0} 
\definecolor{arsenic}{rgb}{0.23, 0.27, 0.29}
\tikzstyle{diredge}=[postaction={decorate,decoration={markings,
\tikzset{
    arrow/.style={decoration={markings, mark=at position 0.7 with
    {\fill(-0.09*#1,-0.03*#1) -- (0,0) -- (-0.09*#1,0.03*#1) -- cycle;}}, postaction={decorate}},
    arrow/.default=1
}
\tikzset{
    arow/.style={decoration={markings, mark=at position 1 with
    {\fill(-0.09*#1,-0.03*#1) -- (0,0) -- (-0.09*#1,0.03*#1) -- cycle;}}, postaction={decorate}},
    arow/.default=1
}
\tikzset{
    arrrow/.style={decoration={markings, mark=at position 0.9 with
    {\fill(-0.09*#1,-0.03*#1) -- (0,0) -- (-0.09*#1,0.03*#1) -- cycle;}}, postaction={decorate}},
    arow/.default=1
}
\newcommand{\fitellipsis}[2] 
{\draw [fill=white]let \p1=(#1), \p2=(#2), \n1={atan2(\y2-\y1,\x2-\x1)}, \n2={veclen(\y2-\y1,\x2-\x1)}
    in ($ (\p1)!0.5!(\p2) $) ellipse [ x radius=\n2/2+0cm, y radius=1.1cm, rotate=\n1];
}
\newcommand{\Fitellipsis}[2] 
{\draw [fill=white]let \p1=(#1), \p2=(#2), \n1={atan2(\y2-\y1,\x2-\x1)}, \n2={veclen(\y2-\y1,\x2-\x1)}
    in ($ (\p1)!0.5!(\p2) $) ellipse [ x radius=\n2/2+0cm, y radius=1.4cm, rotate=\n1];
}
\theoremstyle{plain}
\newtheorem*{thm*}{Theorem}
\newtheorem{thm}{Theorem}[section]
\Crefname{thm}{Theorem}{Theorems}
\newtheorem*{lem*}{Lemma}
\newtheorem{lem}[thm]{Lemma}
\Crefname{lem}{Lemma}{Lemmas}
\newtheorem*{claim*}{Claim}
\crefname{claim}{Claim}{Claims}
\Crefname{claim}{Claim}{Claims}
\Crefname{prop}{Proposition}{Propositions}
\Crefname{remar}{Remark}{Remarks}
\crefname{cor}{Corollary}{Corollaries}
\newtheorem*{conj*}{Conjecture}
\crefname{conj}{Conjecture}{Conjectures}
\Crefname{qn}{Question}{Questions}
\newtheorem{obs}[thm]{Observation}
\Crefname{obs}{Observation}{Observations}
\Crefname{ex}{Example}{Examples}
\theoremstyle{definition}
\Crefname{prob}{Problem}{Problems}
\newtheorem{defn}[thm]{Definition}
\Crefname{defn}{Definition}{Definitions}
\theoremstyle{remark}
\renewenvironment{proof}[1][]{\begin{trivlist}
\item[\hspace{\labelsep}{\bf\noindent Proof#1.\/}] }{\qed\end{trivlist}}
\newcommand{\remove}[1]{}
\newcommand{\ceil}[1]{
    \left\lceil #1 \right\rceil
}
\newcommand{\eps}{\varepsilon}
\title{\vspace{-0.85 cm}
Chv\'atal-Erd\H{o}s condition for pancyclicity}
\date{}
\author{
Nemanja Dragani\'c\thanks{
Department of Mathematics, ETH, Z\"urich, Switzerland. Research supported in part by SNSF grant 200021\_196965.
\newline
\emph{Emails}: \textbf{\{nemanja.draganic,david.munhacanascorreia, benjamin.sudakov\}@math.ethz.ch}.
}
\and
David Munh\'a Correia\footnotemark[1]
\and
Benny Sudakov\footnotemark[1]}
\begin{document} 
\maketitle
\begin{abstract}
An $n$-vertex graph is \emph{Hamiltonian} if it contains a cycle that covers all of its vertices and it is \emph{pancyclic} if it contains cycles of all lengths from $3$ up to $n$. A celebrated meta-conjecture of Bondy states that every non-trivial condition implying Hamiltonicity also implies pancyclicity (up to possibly a few exceptional graphs). We show that every graph $G$ with $\kappa(G) > (1+o(1)) \alpha(G)$ is pancyclic. This extends the famous Chv\'atal-Erd\H{o}s condition for Hamiltonicity and proves asymptotically a $30$-year old conjecture of Jackson and Ordaz. 
\end{abstract}
\section{Introduction}

The notion of Hamiltonicity is one of most central and extensively studied topics in Combinatorics. Since the problem of determining whether a graph is Hamiltonian is NP-complete, a central theme in Combinatorics is to derive sufficient conditions for this property. A classic example is Dirac’s theorem \cite{dirac1952some} which dates back to 1952 and states that every $n$-vertex graph with minimum degree at least $n/2$ is Hamiltonian. Since then, a plethora of interesting and important results about various aspects of Hamiltonicity have been obtained, see e.g. \cite{ajtai1985first,chvatal1972note,kuhn2013hamilton,krivelevich2011critical,krivelevich2014robust,MR3545109,ferber2018counting, cuckler2009hamiltonian, posa1976hamiltonian}, and the surveys \cite{gould2014recent, MR3727617}.

Besides finding sufficient conditions for containing a Hamilton cycle, significant attention has been given to conditions which force a graph to have cycles of other lengths. Indeed, \emph{the cycle spectrum of a graph}, which is the set of lengths of cycles contained in that graph, has been the focus of study of numerous papers and in particular gained a lot of attention in recent years \cite{liu2020solution, alon2022cycle, friedman2021cycle, liu2018cycle, verstraete2016extremal, alon2021divisible, milans2012cycle, keevash2010pancyclicity, ourpaper}. Among other graph parameters, the relation of the cycle spectrum to the minimum degree, number of edges, independence number, chromatic number and expansion of the graph have been studied.

We say that an $n$-vertex graph is \emph{pancyclic} if the cycle spectrum contains all integers from $3$ up to $n$.
In the cycle spectrum of an $n$-vertex graph, it is usually hardest to guarantee the existence of the longest cycle, i.e. a Hamilton cycle.  This intuition was captured in Bondy's famous meta-conjecture \cite{bondy10pancyclic} from 1973, which asserts that any non-trivial condition which implies Hamiltonicity, also implies pancyclicity (up to a small class of exceptional graphs). As a first example, he proved in \cite{bondy1971pancyclic} an extension of Dirac's theorem, showing that minimum degree at least $n/2$ implies that the graph is either pancyclic or that it is the complete bipartite graph $K_{\frac{n}{2},\frac{n}{2}}$. Further, Bauer and Schmeichel \cite{bauer1990hamiltonian}, relying on previous results of Schmeichel and Hakimi \cite{schmeichel1988cycle}, showed that the sufficient conditions for Hamiltonicity given by Bondy \cite{bondy1980longest}, Chvátal \cite{chvatal1972hamilton} and Fan \cite{fan1984new} all imply pancyclicity, up to a certain small family of exceptional graphs. 

Another classic condition which implies Hamiltonicity is given by the famous theorem of Chvatál and Erd\H{o}s \cite{chvatal1972note}. It states that if the connectivity of a graph $G$ is at least as large as its independence number, that is, $\kappa(G)\geq  \alpha(G)$, then $G$ is Hamiltonian. The pancyclicity counterpart of this result has also been investigated - see, e.g.,  \cite{amar1991pancyclism} and the surveys \cite{jackson1990chvatal, saito2007chvatal}. In fact, in 1990, Jackson and Ordaz \cite{jackson1990chvatal} conjectured that $G$ must be pancyclic if $\kappa(G) > \alpha(G)$, which if true would confirm Bondy's meta-conjecture for this classical instance. One can use an old result of Erd\H{o}s \cite{erdos1972some} to show pancyclicity if $\kappa(G)$ is large enough function of $\alpha(G)$. 
A first linear bound on $\kappa(G)$ was given only in 2010 by Keevash and Sudakov \cite{keevash2010pancyclicity}, who showed that $\kappa(G)\geq 600\alpha(G)$ is enough. In this paper, we resolve the conjecture of Jackson and Ordaz asymptotically, by showing that $\kappa(G) > (1+o(1))\alpha(G)$ is already enough to guarantee pancyclicity.

\begin{thm}\label{thm:main}
Let $\eps >0$ and let $n$ be sufficiently large. Then, every $n$-vertex graph $G$ for which we have $\kappa(G) \geq (1 + \eps) \alpha(G)$ is pancylic.
\end{thm}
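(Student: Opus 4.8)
The plan is to combine the Chv\'atal--Erd\H{o}s Hamiltonicity guarantee with a rotation/extension machinery that converts a long cycle into cycles of every shorter length. Fix $\eps>0$, set $\alpha = \alpha(G)$ and note that $\kappa(G)\ge(1+\eps)\alpha$. By the theorem of Chv\'atal and Erd\H{o}s, $G$ is Hamiltonian; moreover the same hypothesis gives strong connectivity-type robustness, so we actually have a Hamilton cycle $C=v_1v_2\cdots v_n v_1$ to work with. The goal is to produce, for every $3\le \ell\le n-1$, a cycle of length exactly $\ell$. I would split the range of $\ell$ into a ``short'' regime, a ``medium'' regime, and a ``long'' regime $\ell = n - t$ for small $t$, and handle each with a different tool.

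For the long regime, the key is a \emph{cycle-shrinking} step: given a cycle $C'$ of length $m$ with $m$ large, I want to find a cycle of length $m-1$ (or $m-2$). Using the independence number bound, among any $\alpha+1$ vertices of $C'$ two are adjacent, and more quantitatively the number of chords of $C'$ is large — of order $n^2/\alpha$ at least by a Tur\'an-type count on the complement restricted to $V(C')$. A chord $v_iv_j$ splits $C'$ into two arcs; taking the chord plus the longer arc yields a shorter cycle, and by choosing the chord so that the shorter arc has the required length one drops down to the next value. The subtlety is that a single chord only gives \emph{one} new length, not all of them; to sweep out a whole interval of lengths I would instead use the connectivity directly: by Menger's theorem there are $(1+\eps)\alpha$ internally disjoint paths between any two vertices $x,y$ of $C'$, and rerouting an arc of $C'$ through a short such path (a path of length $2$ or $3$, which must exist because the paths are disjoint and there are more of them than $\alpha$) changes the cycle length in a controlled way. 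Iterating gives a ``ladder'' of cycle lengths descending from $n$.

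For the short and medium regimes I would run a Bondy-style argument on a densest part of $G$. Since $\alpha(G)\le n/(1+\eps)\cdot\kappa(G)/\kappa(G)$... more usefully, $G$ has a vertex of degree at least $n\cdot\frac{\text{something}}{\alpha}$? No — the cleaner route is: the Chv\'atal--Erd\H{o}s hypothesis forces $G$ to contain a large ``expander-like'' subgraph, or directly a complete-bipartite-type minor; but the simplest sufficient input is that any set of $\alpha+1$ vertices spans an edge, which by Ramsey/Tur\'an forces a clique or a dense subgraph of size $\Theta(\log n)$ and then — via a theorem of Erd\H{o}s cited in the introduction, or by Bondy's pancyclicity theorem applied to a dense subgraph with minimum degree $\ge$ half its order — we obtain all small cycle lengths up to some $n_0$. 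To bridge from $n_0$ to the bottom of the long regime, I would use a direct absorption: take the Hamilton cycle, find within it many disjoint short chords, and ``locally shortcut'' blocks to realize every intermediate length. Concretely, if $C=v_1\cdots v_n$ and $v_1v_k$ is an edge, then $C$ yields cycles of lengths $k$ and $n-k+2$; ranging over all chords and all disjoint combinations of chords covers a long interval, provided the chords are sufficiently spread — and spreadness is guaranteed because $\overline{G}[V(C)]$ has independence-number-bounded structure so its edges (the non-chords) cannot all cluster.

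The main obstacle I anticipate is the \emph{uniform} coverage of every length in the medium regime where neither ``dense local subgraph'' nor ``few chords on a long cycle'' arguments are individually tight: one needs the rotation-extension technique to be robust enough that after rerouting through disjoint Menger paths, the resulting structure is still a single cycle (no accidental merging into two components) and its length is exactly predictable. Making the rotations commute — so that a length-$\ell$ cycle and a length-$(\ell-1)$ cycle can be obtained from nearly the same reroutings — is the delicate bookkeeping. A secondary obstacle is the $o(1)$ slack: to get away with $\kappa \ge (1+\eps)\alpha$ rather than $\kappa \ge C\alpha$, every application of Menger must use essentially all $\eps\alpha$ ``spare'' paths efficiently, so I would want a single global decomposition of $G$ into an independent-set-free core plus a controlled remainder, proven once, rather than re-invoking connectivity locally many times.
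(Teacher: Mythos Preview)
Your proposal is a sketch, not a proof, and the central mechanism you rely on does not actually produce \emph{consecutive} cycle lengths. A chord $v_iv_j$ of a Hamilton cycle $C$ gives you cycles of lengths $|i-j|+1$ and $n-|i-j|+1$; to sweep out every integer $\ell$ you would need a chord of every possible gap, and the hypothesis $\kappa\ge(1+\eps)\alpha$ does not guarantee this. Rerouting through short Menger paths has the same defect: you can change the length by some amount, but you have no control forcing that amount to be exactly $1$ (or any fixed value) at each step. You correctly flag this as ``the main obstacle'' in the medium regime, and then do not resolve it --- the phrases ``iterating gives a ladder'' and ``ranging over all chords \ldots covers a long interval'' are exactly where the argument needs substance and has none. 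For the short regime your discussion is equally inconclusive (the aborted computation and ``No --- the cleaner route is\ldots'' do not constitute an argument).

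The paper's solution to the consecutive-length problem is a structural idea you are missing entirely: because $\delta(G)\ge\kappa>\alpha$, every neighbourhood spans an edge, so one can greedily build a path $P^r_{2r}$ of length $2r$ with a triangle glued to each of its $r$ edges, for $r$ up to roughly $\eps\alpha/2$. Between the endpoints of such a path there are paths of \emph{every} length in $[r,2r]$ --- this is the source of consecutive lengths. The paper then proves separate path-shortening and path-augmenting lemmas (using minimum degree, independence number, and connectivity respectively) that show the \emph{other} arc of a suitable cycle is ``$r/2$-dense'': it supports path lengths hitting every window of width $r/2$ in a long interval. Summing the two arcs via Observation~\ref{obs:combining} then gives every cycle length in that interval. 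The short range is handled by direct constructions for $C_3,C_4,C_5$ plus cycle--complete Ramsey bounds (Erd\H{o}s--Faudree--Rousseau--Schelp and Keevash--Long--Skokan) and Bondy--Simonovits together with a probabilistic partition that attaches a spare triangle-vertex to many edges, converting even cycles to odd ones. None of this machinery --- triangles-on-paths, the density notion, the three tailored shortening/augmenting lemmas --- appears in your outline, and without some device for hitting consecutive integers the plan cannot close.
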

\noindent Next we briefly discuss some of the key steps in the proof of this theorem. It will be convenient for us to consider different ranges of cycle lengths whose existence we want to show, and for each range we have a separate subsection which deals with it. This is done in Section \ref{sec:thm}. In order to find these different cycle lengths we will combine various tools on shortening/augmenting paths and finding consecutive path lengths between two fixed vertices.  

For example, for finding consecutive path lengths we crucially use that since $\kappa(G) > \alpha(G)$, it must be that $G$ contains triangles - moreover, it contains a \emph{path with triangles attached to many of its edges} (see Definition \ref{def:pathstriangles}), which trivially implies the existence of many consecutive path lengths between the endpoints of such a path. For shortening/augmenting paths, we also introduce new tools. One of them is used to shorten paths using only the minimum degree of the graph (Lemma \ref{lem:jumpwithdegree}), while another one augments paths using both the independence and connectivity number (Lemma \ref{lem:augmentingpath}). Furthermore, we will also use a novel result proven in \cite{ourpaper} using the Gallai-Milgram theorem, in order to shorten paths using the independence number of the graph (Lemma \ref{lem:jumpwithzigzag}). In Section~\ref{sec:preliminaries} we present these tools, together with some other useful results of a similar flavour. After that, in Section \ref{sec:thm}, we prove Theorem~\ref{thm:main}. The general proof strategy is to find a cycle of appropriate length which consists of two paths, one of which has many edges to which triangles are attached. Then we apply our shortening/augmenting results to the other path. Combining the consecutive path lengths from the first path with the path lengths obtained from the second path (see Observation~\ref{obs:combining}), we will get all possible cycle lengths. Finally, in Section \ref{sec:concludingrem} we make some concluding remarks.

\section{Preliminaries}\label{sec:preliminaries}
\subsection{Notation and definitions}
We mostly use standard graph theoretic notation. Let $G$ be a finite graph. Denote by $V(G)$ its vertex set, and let $S_1,S_2\subseteq V(G)$. We denote by $G[S_1]$ the subgraph of $G$ induced by $S_1$, and by $E[S_1,S_2]$ the set of edges with one endpoint in $S_1$ and the other in $S_2$. Let $H$ be a subgraph of $G$. We denote by $G[H]$ the graph $G[V(H)]$.
A path $P=(x_0,x_1,\ldots,x_\ell)$ of length $\ell$ is a graph on vertex set $\{x_0,x_1,\ldots,x_\ell\}$ with an edge between $x_{i-1}$ and $x_{i}$ for all $i\in[\ell]$. We say that $x_0$ and $x_\ell$ are the endpoints of $P$, and we call $P$ an $x_0x_\ell$-path. Given disjoint sets of vertices $A,B$, we say that $P$ is a path \emph{going from $A$ to $B$} if $x_0 \in A, x_l \in B$ and $x_i \notin A \cup B$ for all $0 < i < l$. We denote by $\alpha(G)$ the independence number of $G$. The \emph{connectivity} $\kappa (G)$ of a connected graph $G$ is the minimum number of vertices whose removal makes G disconnected or reduces it to a trivial graph.

Given sets $A_1,A_2\subset \mathbb N$, we denote by $A_1+A_2$ the set of integers $c$ such that $c=a_1+a_2$ for some $a_1\in A_1$ and $a_2\in A_2.$ Throughout the paper we omit floor and ceiling signs for clarity of presentation, whenever it does not impact the argument.
\begin{defn}
Let  $a,b,p$ be positive real numbers. Given a graph $G$, and two vertices $x$ and $y$, we say that the pair $xy$ is $p$-dense in the interval $[a,b]$ if for every subinterval $[a',b']$ with $b'-a'\geq 
p$ there is an integer $\ell\in[a',b']$ and an $xy$-path in $G$ of length $\ell$. Note that, in particular, $xy$ is $0$-dense in $[a,b]$ if there are paths of all lengths in $[a,b]$ between $x$ and $y$.
\end{defn}
\noindent We now give a trivial observation  which will be used in the proof of Theorem \ref{thm:main}. It states that appropriate combinations of internally vertex-disjoint paths of different lengths imply the existence of cycles of many different lengths.
\begin{obs}\label{obs:combining}
Let $G$ be a graph whose vertex set contains $t$ disjoint sets $S_1,\ldots, S_t$ and another set of $t$ vertices $v_1,\ldots,v_t$ outside of $\bigcup_{i=1}^t S_i$. For each $i\in[t]$, let $A_i\subset \mathbb N$ and suppose that for every $i$ the induced subgraph $G[v_i\cup S_i\cup v_{i+1}]$ is such that it contains a $v_iv_{i+1}$-path of length $\ell$ for each $\ell\in A_i$ (with $v_{t+1}=v_1$). Then for every $\ell\in A_1+\ldots+A_t$, the graph $G$ contains a cycle of length $\ell$.
\end{obs}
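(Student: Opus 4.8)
The plan is to realize each target length $\ell \in A_1 + \cdots + A_t$ by gluing together one path from each of the $t$ pieces. Fix such an $\ell$ and write $\ell = a_1 + \cdots + a_t$ with $a_i \in A_i$ for every $i \in [t]$ (indices taken modulo $t$, with $v_{t+1} = v_1$). By hypothesis, for each $i$ the induced subgraph $G[v_i \cup S_i \cup v_{i+1}]$ contains a $v_iv_{i+1}$-path $P_i$ of length $a_i$. The key structural remark is that every internal vertex of $P_i$ — that is, every vertex of $P_i$ other than its endpoints $v_i$ and $v_{i+1}$ — lies in $S_i$, simply because those are the only vertices of $G[v_i\cup S_i\cup v_{i+1}]$ available besides $v_i$ and $v_{i+1}$, and a path does not repeat vertices.

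Next I would verify that $P_1, \dots, P_t$ are pairwise internally vertex-disjoint and that two of them can meet only at the shared endpoint vertices. Indeed, the set of internal vertices of $P_i$ is contained in $S_i$, and since $S_1,\dots,S_t$ are pairwise disjoint, internal vertices of distinct paths cannot coincide. Moreover, since $v_1,\dots,v_t$ are distinct and lie outside $\bigcup_{i=1}^t S_i$, no $v_j$ is an internal vertex of any $P_i$. Hence the only vertices shared by two of the paths are the endpoints, and $P_i$ and $P_{i+1}$ meet exactly at $v_{i+1}$.

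It then follows that $C := P_1 \cup P_2 \cup \cdots \cup P_t$ is a cycle: traversing $P_1$ from $v_1$ to $v_2$, then $P_2$ from $v_2$ to $v_3$, and so on, and finally $P_t$ from $v_t$ back to $v_1$, one visits each vertex at most once and returns to the start, with total length $\sum_{i=1}^t a_i = \ell$. This produces the desired cycle of length $\ell$ in $G$, completing the argument.

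I do not expect any genuine obstacle here; the statement is truly routine, and the only point requiring care is the bookkeeping that makes the concatenation a simple cycle rather than merely a closed walk — which is exactly what the pairwise disjointness of the $S_i$'s and the placement of the $v_j$'s outside all of them provides. (One implicitly assumes $\ell \geq 3$ so that ``cycle'' is meaningful; in every application of this observation inside the proof of Theorem~\ref{thm:main} this will be automatic.)
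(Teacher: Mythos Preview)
Your proof is correct and is exactly the natural argument; the paper itself labels this a ``trivial observation'' and gives no proof at all, so there is nothing to compare against beyond noting that your write-up is precisely the routine gluing argument one has in mind.
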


\subsection{Cycles and paths with triangles}\label{sec:pathstriangles}
One of the crucial objects which are used in our proof will be cycles which have triangles attached to some of their  edges. Evidently, one can increase the length of such a cycle by precisely one, by using the two edges of a triangle, instead of the edge which lies on the cycle. 
\begin{defn}\label{def:pathstriangles}
Define the graph $C^{r}_\ell$ to be the graph formed by a cycle $v_1v_2 \ldots v_lv_1$ of length $\ell$ with the additional edges $v_1v_3,v_3v_5, \ldots, v_{2r-1}v_{2r+1}$ (if $r = 0$, then it is just a cycle of length $l$). We will refer to this as a \emph{cycle of length $\ell$ with $r$ triangles}. Similarly define $P^{r}_\ell$ and refer to it as a \emph{path of length $l$ with $r$ triangles}, where $P_0^0$ is just a vertex.
\end{defn}
\noindent The following is an easy starting point for the existence of the graphs $C^{r}_\ell$ with appropriate parameters, as subgraphs in graphs $G$ with  $\kappa(G) \geq \alpha(G)$.
\begin{lem}\label{lem:cyclewithtriangles}
Every $n$-vertex graph $G$ with $\kappa(G) \geq \alpha(G)$ contains a $C^{r}_l$ for all $r$ such that $0 \leq r \leq \frac{\kappa(G) - \alpha(G)}{2}$ and some $l$ with $l - 2(r+1) \leq \max \left(\frac{n}{\kappa(G)-2r+1}, \frac{n}{\kappa(G)-1}\right)$. In particular, it contains a $P^{r}_{2r}$ for all such $r$.
\end{lem}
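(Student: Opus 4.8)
The plan is to build the cycle with triangles greedily, one triangle at a time, while always maintaining a cycle through the current structure. The starting point is the classical Chv\'atal--Erd\H{o}s theorem: since $\kappa(G)\geq\alpha(G)$, the graph $G$ is Hamiltonian, and in fact for \emph{any} set $W$ of at most $\kappa(G)$ vertices we can find a cycle through $W$ whose length is controlled; more to the point, a standard averaging argument on a Hamilton cycle gives, for any vertex $v$, a cycle through $v$ of length at most $\lceil n/\delta\rceil$-ish, but what we actually want is a short cycle through a \emph{prescribed small set} of vertices. So the first step is: for a fixed set $R$ of $2r+1$ vertices forming a ``triangle path'' $v_1v_3v_5\cdots v_{2r+1}$ (which we must first locate), find a cycle containing all of them.

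The second step is to produce that triangle path. Here we use that $\kappa(G)>\alpha(G)$ forces triangles: any $\kappa(G)+1$ vertices contain an edge, and one extends this to a path $v_1v_3\cdots v_{2r+1}$ with consecutive vertices adjacent — roughly, greedily pick $v_1$, then among the $\geq\kappa(G)$ neighbours needed to avoid a large independent set find $v_3$ adjacent to $v_1$, and iterate; the bound $r\leq(\kappa(G)-\alpha(G))/2$ is exactly what guarantees there is enough room (after removing the $\leq 2r$ vertices used so far, connectivity stays above the independence number so an edge persists). Once we have such a path $Q=v_1v_3\cdots v_{2r+1}$ on $2r+1$ vertices, the third step is to route a cycle through $V(Q)$: delete $V(Q)$ minus its two endpoints from $G$ if needed, or rather apply a Chv\'atal--Erd\H{o}s-type argument in $G$ to the $2r+1$ terminals — since $\kappa(G)\geq\alpha(G)\geq\alpha(G-V(Q))$ and $2r+1\leq\kappa(G)$, one gets a system of $2r+1$ internally disjoint paths joining the $v_{2i-1}$ cyclically, and by choosing these paths short (averaging: total length at most $n$, so some rotation of the terminals along a Hamilton-type cycle yields segments summing to at most $n$, giving the stated bound $\ell-2(r+1)\leq\max(\tfrac{n}{\kappa-2r+1},\tfrac{n}{\kappa-1})$ — the two cases coming from whether we measure connectivity before or after deleting the $2r-1$ interior path vertices). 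Finally, replacing each edge $v_{2i-1}v_{2i+1}$ of the cycle-route by the two edges $v_{2i-1}v_{2i}$, $v_{2i}v_{2i+1}$ through a fresh vertex $v_{2i}$ turns the cycle of length $\ell-r$ through $V(Q)$ into a genuine $C^r_\ell$; and if one starts instead from a minimal such configuration (i.e.\ $\ell$ as small as possible) the contracted cycle on the $2r+1$ terminals has length as small as $2r+1$ when everything is tight, forcing $\ell=2(r+1)$ and yielding the $P^r_{2r}$ as the path obtained by deleting one cycle edge not in a triangle.

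The main obstacle I expect is the quantitative control of $\ell$: it is easy to find \emph{some} cycle through the $2r+1$ chosen vertices, but pinning down the bound $\ell-2(r+1)\leq\max\bigl(\tfrac{n}{\kappa(G)-2r+1},\tfrac{n}{\kappa(G)-1}\bigr)$ requires care about which vertices have been deleted when the connectivity is invoked, and an averaging/pigeonhole step to select the short routing. In particular one must argue that after fixing the triangle path $Q$, the auxiliary graph still has connectivity at least $\kappa(G)-(2r-1)$ relative to the terminal set — or alternatively keep the interior vertices of $Q$ available as ``pass-through'' vertices and lose only in the other term $\tfrac{n}{\kappa(G)-1}$ — and a Menger/fan argument then produces the disjoint connecting paths whose lengths one averages against $n$. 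The rest (the greedy triangle-path construction and the edge-to-path substitution) is routine given $r\leq(\kappa(G)-\alpha(G))/2$.
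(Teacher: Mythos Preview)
Your proposal has a genuine gap in the cycle-closing step, and it is also substantially more complicated than what is needed.

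The greedy construction of the triangle path is the right starting point, but your description garbles what is actually being built. What one constructs is the full $P^{r}_{2r}$ on $2r+1$ vertices $v_1v_2\cdots v_{2r+1}$ with chords $v_1v_3,\ldots,v_{2r-1}v_{2r+1}$: at each stage you look at $N(v_{2i+1})\setminus\{v_1,\ldots,v_{2i}\}$, which has size larger than $\alpha(G)$, and therefore contains an \emph{edge} $v_{2i+2}v_{2i+3}$; this single step produces both the next path vertex $v_{2i+2}$ and the next chord endpoint $v_{2i+3}$ simultaneously. You seem to be building only the chord path $v_1v_3\cdots v_{2r+1}$ on $r+1$ vertices and hoping to insert ``fresh'' vertices $v_{2i}$ afterwards, but nothing guarantees that $v_{2i-1}$ and $v_{2i+1}$ have an unused common neighbour. (Your counting also slips: you speak of ``$2r+1$ terminals'' and ``$2r+1$ internally disjoint paths joining the $v_{2i-1}$ cyclically'', but there are only $r+1$ odd-indexed vertices.)

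More importantly, once $P^{r}_{2r}$ is in hand, routing a cycle through \emph{all} its vertices is the wrong move. The paper simply applies Menger between the two endpoints $x=v_1$ and $y=v_{2r+1}$: there are $\kappa(G)$ internally disjoint $xy$-paths, at most $2r-1$ of which meet the interior of $P^{r}_{2r}$, so at least $\kappa(G)-2r+1$ of them avoid it, and by averaging one of these has at most $n/(\kappa(G)-2r+1)$ internal vertices. Concatenating that path with $P^{r}_{2r}$ yields the desired $C^{r}_\ell$ and the stated bound on $\ell$ directly (the term $n/(\kappa(G)-1)$ arises only in the degenerate case $r=0$, where one starts from a single edge instead). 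Your plan to link $r+1$ terminals cyclically would require controlling the \emph{order} in which the cycle visits them and forcing it to use the specific edges $v_{2i-1}v_{2i+1}$, neither of which follows from a standard fan/Menger argument; and the quantitative bound you sketch does not come out of any averaging you have described.
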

\begin{proof}
We will first show that $G$ must always contain a $P^{r'}_{2r'}$ for $r':=\left\lfloor \frac{\kappa(G) - \alpha(G)}{2}\right\rfloor$- we construct such a path greedily. Suppose that we have the vertices $v_1v_2v_3 \ldots v_{2i+1}$ which form a $P^{i}_{2i}$, so that the edges $v_1v_3, \ldots, v_{2i-1}v_{2i+1}$ are also present. Provided that $i < r'$, we can augment this path as follows. Consider the set $S := N(v_{2i+1}) \setminus \{v_1, \ldots, v_{2i}\}$ - by assumption, this has size at least $\delta(G) - 2i > \kappa(G) - 2r' \geq \alpha(G)$. Therefore, it must contain an edge $v_{2i+2}v_{2i+3}$. Clearly, $v_{2i+1}v_{2i+2}v_{2i+3}$ forms a triangle and thus, $v_1v_2v_3 \ldots v_{2i+1}v_{2i+2}v_{2i+3}$ is a $P^{i+1}_{2i+2}$. Continuing with this procedure until $i = r'$, gives the desired $P^{r'}_{2r'}$.

Now, fix $r$ with the given condition. If $r = 0$, then take an edge $xy$ in $G$. By Menger's theorem, there exist at least $\kappa(G)$ internally vertex-disjoint $xy$-paths in $G$ and thus, at least $\kappa(G) - 1$ of these are not the edge $xy$. Therefore, there is such a path with at most $\frac{n}{\kappa(G) - 1} + 2$ vertices, which together with the edge $xy$, then creates a cycle of length at most $\frac{n}{\kappa(G) - 1} + 2$. If $r \geq 1$, by the previous paragraph, $G$ contains a $P^{r}_{2r}$ - let $x,y$ be its endpoints. By Menger's theorem, there exist at least $\kappa(G)$ internally vertex-disjoint $xy$-paths in $G$. Since at most $2r-1$ of these intersect $P^{r}_{2r} \setminus \{x,y\}$, there exists one which is disjoint to $P^{r}_{2r} \setminus \{x,y\}$ and contains at most $\frac{n}{\kappa(G) - 2r+1}$ internal vertices. This produces the desired $C^{r}_l$.
\end{proof}
\noindent We can also use this type of cycles to extend the celebrated Chv\'atal-Erd\H{o}s theorem \cite{chvatal1972note}. 

\begin{thm}[Chv\'atal-Erd\H{o}s \cite{chvatal1972note}]
    If for a graph $G$ we have that $\kappa(G)\geq \alpha (G)$, then $G$ is Hamiltonian.
\end{thm}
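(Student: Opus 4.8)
The plan is to run the classical rotation argument of Chv\'atal and Erd\H{o}s. After the trivial reduction ($n\ge 3$, and $\alpha(G)=1$ forces $G=K_n$, so we may assume $\alpha(G)\ge 2$, hence $\kappa(G)\ge 2$ and $G$ is $2$-connected and contains a cycle), I would argue by contradiction: let $C$ be a \emph{longest} cycle of $G$, fix a cyclic orientation with $x^+$ denoting the successor of $x$ on $C$, and suppose $C$ is not spanning. Fix a connected component $H$ of $G-V(C)$ and set $X:=N_G(V(H))\cap V(C)$, which is nonempty since $G$ is connected.

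The heart of the argument is a single \emph{rerouting} observation: for every $x\in X$, the vertex $x^+$ has no neighbour in $V(H)$. Indeed, if $x^+$ had a neighbour $w\in V(H)$, then, taking a neighbour $w'\in V(H)$ of $x$ and a $w'$--$w$ path $Q$ in the connected graph $G[V(H)]$ (a single vertex if $w=w'$), replacing the edge $xx^+$ of $C$ by $x\,w'\,Q\,w\,x^+$ produces a cycle through $V(C)\cup V(Q)$, which is strictly longer than $C$ --- a contradiction. From this I would extract two things at once: no successor $x^+$ with $x\in X$ lies in $X$, which forces $X\subsetneq V(C)$ (otherwise some $x\in X$ would have $x^+\in V(C)=X$); and then, $V(C)\setminus X$ being nonempty, the set $X$ separates $V(H)$ from $V(C)\setminus X$ in $G$, so $|X|\ge\kappa(G)\ge\alpha(G)$.

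To finish, write $X=\{y_1,\dots,y_s\}$ with $s\ge\alpha(G)$, pick any $u\in V(H)$, and consider the set $\{y_1^+,\dots,y_s^+\}\cup\{u\}$; its $s+1>\alpha(G)$ vertices are distinct ($x\mapsto x^+$ is injective on $V(C)$, and $u\notin V(C)$). I claim this set is independent, contradicting the definition of $\alpha(G)$. First, $u$ is nonadjacent to every $y_i^+$ by the rerouting observation (since $u\in V(H)$). Second, if $y_i^+y_j^+\in E(G)$ for some $i\ne j$, orient $C$ so that the cyclic order reads $y_i,y_i^+,\dots,y_j,y_j^+,\dots,y_i$, choose $w_i\in N(y_i)\cap V(H)$ and $w_j\in N(y_j)\cap V(H)$ joined by a path $Q$ in $G[V(H)]$, and form the cycle that runs $y_i\,w_i\,Q\,w_j\,y_j$, then backwards along $C$ from $y_j$ to $y_i^+$, then across the edge $y_i^+y_j^+$, then forward along $C$ from $y_j^+$ back to $y_i$; this cycle again passes through $V(C)\cup V(Q)$ and is longer than $C$ --- a contradiction. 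Hence $C$ is spanning and $G$ is Hamiltonian.

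The main obstacle --- and essentially the only nonroutine part --- is setting up these two rerouting/rotation steps correctly; in particular, one must notice that the first rerouting observation already rules out the degenerate possibility $X=V(C)$, so that $X$ is a genuine vertex cut and the hypothesis $\kappa(G)\ge\alpha(G)$ can be fed in. Once this is done, producing the independent set of size $s+1>\alpha(G)$ is immediate.
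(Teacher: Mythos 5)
Your proof is correct --- it is the classical Chv\'atal--Erd\H{o}s rotation/rerouting argument. The paper itself states this theorem without proof (citing the original source), but its proof of the strengthened Theorem~\ref{ECwithtriangle} is essentially the same argument; the only cosmetic difference is that there one applies Menger's fan theorem to a single vertex $v$ outside the extremal cycle to obtain at least $\kappa(G)$ attachment points, whereas you obtain them by taking a whole component $H$ of $G-V(C)$ and observing that $N(V(H))\cap V(C)$ is a separator, the rerouting step via successors being identical.
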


\noindent Our resut states that if the Chv\'atal-Erd\H{o}s condition is satisfied, then we can find a Hamilton cycle with a certain number of triangles, depending on the discrepancy between the connectivity and the independence number.

\begin{thm}\label{ECwithtriangle}
Every $n$-vertex graph $G$ such that $\kappa(G) \geq \alpha(G)$ contains a $C^r_n$ with $r =\left\lfloor \frac{\kappa(G) - \alpha(G)}{2}\right\rfloor$.
\end{thm}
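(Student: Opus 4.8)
The plan is to assemble the desired spanning structure from two internally disjoint pieces: a short path carrying all $r$ triangles, obtained from \Cref{lem:cyclewithtriangles}, and a Hamilton path of the rest of the graph joining its two endpoints. Concretely, by the ``in particular'' clause of \Cref{lem:cyclewithtriangles}, $G$ contains a copy of $P^r_{2r}$; write its vertices as $v_1v_2\ldots v_{2r+1}$ with the chords $v_1v_3,\, v_3v_5,\, \ldots,\, v_{2r-1}v_{2r+1}$ present, and set $x := v_1$, $y := v_{2r+1}$, and $U := \{v_2,\ldots,v_{2r}\}$, so $|U| = 2r-1$. It then suffices to produce a Hamilton path of $G-U$ from $x$ to $y$: gluing it to the path $v_1v_2\ldots v_{2r+1}$ gives a Hamilton cycle of $G$ on which $v_1v_3,\ldots,v_{2r-1}v_{2r+1}$ are the chords of $r$ consecutive triangles sitting on the first $2r$ edges, that is, a $C^r_n$.

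If $r = 0$ there is nothing to do beyond the Chv\'atal-Erd\H{o}s theorem, which already supplies a Hamilton cycle, i.e.\ a $C^0_n$; so assume $r \geq 1$. The tool I would use is the \emph{Hamilton-connected} strengthening of Chv\'atal-Erd\H{o}s (also due to Chv\'atal and Erd\H{o}s): if a graph $H$ satisfies $\kappa(H) \geq \alpha(H)+1$, then any two vertices of $H$ are joined by a Hamilton path. Apply it to $H := G-U$. Deleting a set of $|U|$ vertices lowers connectivity by at most $|U|$ and cannot raise the independence number, and since $r = \lfloor (\kappa(G)-\alpha(G))/2 \rfloor$ gives $2r \leq \kappa(G)-\alpha(G)$, hence $|U| = 2r-1 \leq \kappa(G)-\alpha(G)-1$, we obtain
\[
\kappa(G-U) \ \geq\ \kappa(G)-|U| \ \geq\ \alpha(G)+1 \ \geq\ \alpha(G-U)+1 .
\]
Thus $G-U$ is Hamilton-connected, and since $x=v_1$ and $y=v_{2r+1}$ are distinct vertices of $G-U$, it contains the Hamilton $xy$-path we wanted, completing the construction.

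The only delicate point is the appeal to the Hamilton-connected version of Chv\'atal-Erd\H{o}s; if one prefers not to cite it as a black box, it can be reproved by essentially the same rotation-extension argument that underlies the original theorem, so I do not expect it to be a genuine obstacle. I would also check that $G-U$ is not degenerately small, but this is automatic: $\kappa(G-U)\geq \alpha(G-U)+1\geq 2$ already forces $|V(G-U)|\geq 3$, so the Hamilton $xy$-path statement is meaningful. Everything else is routine bookkeeping, and I expect the whole argument to be short.
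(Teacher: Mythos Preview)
Your argument is correct and is a genuinely different route from the paper's. Both proofs begin with the $P^r_{2r}$ supplied by \Cref{lem:cyclewithtriangles}, but the paper then runs a direct maximality argument: it takes a largest $C^r_\ell$, picks a vertex outside it, uses $\kappa$-connectivity to find $\alpha(G)$ internally disjoint fans into the non-triangle part of the cycle, and shows (exactly as in the classical Chv\'atal--Erd\H{o}s proof) that the successors of the landing points together with the outside vertex force an edge that enlarges the cycle while preserving all $r$ triangles. You instead delete the $2r-1$ interior vertices of the $P^r_{2r}$ and invoke the Hamilton-connected strengthening of Chv\'atal--Erd\H{o}s on the remainder, which applies because the deletion drops $\kappa$ by at most $2r-1 \leq \kappa(G)-\alpha(G)-1$ and never raises $\alpha$. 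Your approach is shorter and cleanly modular, at the cost of citing (or reproving) the Hamilton-connected version as a black box; the paper's approach is fully self-contained and in effect re-derives just the fragment of that result needed here. Substantively the two arguments rest on the same rotation idea, and neither gains anything quantitatively over the other.
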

\begin{proof}
Suppose for contradiction that some $\ell<n$ is maximal such that there exists a copy of $C^{r}_\ell$ in $G$.
Note that $\ell$ exists by Lemma \ref{lem:cyclewithtriangles}. Order the cycle as $v_1v_2 \ldots v_\ell v_1$ so that the edges $v_1v_3,v_3v_5, \ldots, v_{2r-1}v_{2r+1}$ are also present. Since $\ell < n$, there is a vertex $v$ not in $C^{r}_l$. Moreover, as $\kappa(G) \geq \alpha(G) + 2r$, there exist $\alpha(G)$ paths contained in $V(G) \setminus \{v_1, \ldots, v_{2r}\}$, all of which go from $v$ to $C^{r}_l$ and are vertex-disjoint apart from the initial vertex $v$. Let us denote these paths as $P_{i_1}, P_{i_2}, \ldots$ so that $v_j = P_j \cap C^r_l$. Consider the set $S := \{v_{i_1 + 1}, v_{i_2 + 1}, \ldots\}$ with indices taken modulo $l$, so that $|S| \geq \alpha(G)$. Observe (as illustrated in Figure \ref{fig:erdoschvatalswitch}) that then there must be an edge contained in $S \cup \{v\}$ and that any such edge can be used to augment $C^{r}_l$ to a $C^{r}_{l'}$ with $l' > l$, contradicting the maximality of $l$.
\end{proof}

\begin{figure}[h]
    \centering
    \begin{tikzpicture}[scale=1.1,main node/.style={circle,draw,color=black,fill=black,inner sep=0pt,minimum width=3pt}]
        \tikzset{cross/.style={cross out, draw=black, fill=none, minimum size=2*(#1-\pgflinewidth), inner sep=0pt, outer sep=0pt}, cross/.default={2pt}}
	\tikzset{rectangle/.append style={draw=brown, ultra thick, fill=red!30}}

\draw[line width = 1pt] (0,-2) arc
    [
        start angle=-90,
        end angle=90,
        x radius=2cm,
        y radius =2cm
    ] ;
\draw[red, opacity = 0.3, line width = 4pt] (1.1,1.66) arc
    [
        start angle=57,
        end angle=90,
        x radius=2cm,
        y radius =2cm
    ] ;
\draw[red, opacity = 0.3, line width = 4pt] (1.414,-1.414) arc
    [
        start angle=-45,
        end angle=-90,
        x radius=2cm,
        y radius =2cm
    ] ;
    \draw[red, opacity = 0.3, line width = 4pt] (1.7,-1.07) arc
    [
        start angle=-32,
        end angle=45,
        x radius=2cm,
        y radius =2cm
    ] ;
\draw[line width = 1pt] (0,-2) to (0,2);
\draw[red, opacity = 0.3, line width = 4pt] (0,-2) to  (0,2);

\node[scale = 4] at (0,2) {$.$};
\node[scale = 0.75] at (0,2.2) {$v_1$};
\node[scale = 4] at (0,-2) {$.$};
\node[scale = 0.75] at (0,-2.2) {$v_{2r+1}$};

\node[scale = 4] at (0,-1.4) {$.$};
\node[scale = 4] at (0,-0.8) {$.$};
\node[scale = 4] at (0,1.4) {$.$};
\node[scale = 4] at (0,0.8) {$.$};
\node[scale = 4] at (-0.5,-1.7) {$.$};
\node[scale = 4] at (-0.5,-1.1) {$.$};
\node[scale = 4] at (-0.5,1.7) {$.$};
\node[scale = 4] at (-0.5,1.1) {$.$};
\draw[line width = 1pt] (0,-2) to (-0.5,-1.7);
\draw[line width = 1pt] (0,-1.4) to (-0.5,-1.7);
\draw[line width = 1pt] (0,2) to (-0.5,1.7);
\draw[line width = 1pt] (0,1.4) to (-0.5,1.7);
\draw[line width = 1pt] (0,1.4) to (-0.5,1.1);
\draw[line width = 1pt] (0,0.8) to (-0.5,1.1);
\draw[line width = 1pt] (0,-1.4) to (-0.5,-1.1);
\draw[line width = 1pt] (0,-0.8) to (-0.5,-1.1);
\draw[red, opacity = 0.3, line width = 4pt] (0,-2) to (-0.5,-1.7);
\draw[red, opacity = 0.3,line width = 4pt] (0,-1.4) to (-0.5,-1.7);
\draw[red, opacity = 0.3,line width = 4pt] (0,2) to (-0.5,1.7);
\draw[red, opacity = 0.3,line width = 4pt] (0,1.4) to (-0.5,1.7);
\draw[red, opacity = 0.3,line width = 4pt] (0,1.4) to (-0.5,1.1);
\draw[red, opacity = 0.3,line width = 4pt] (0,0.8) to (-0.5,1.1);
\draw[red, opacity = 0.3,line width = 4pt] (0,-1.4) to (-0.5,-1.1);
\draw[red, opacity = 0.3,line width = 4pt] (0,-0.8) to (-0.5,-1.1);
\node at (-0.3,-0.6) {$.$};
\node at (-0.3,-0.2) {$.$};
\node at (-0.3,0.2) {$.$};
\node at (-0.3,0.6) {$.$};

\node[scale = 4] at (8,0) {$.$};
\node at (8.3,0) {$v$};

\node[scale = 4] at (1.414,1.414) {$.$};
\node[scale = 4] at (1.414,-1.414) {$.$};
\node[scale = 4] at (1.1,1.66) {$.$};
\node[scale = 4] at (1.7,-1.07) {$.$};
\node[scale = 0.75] at (1.3,-1.05) {$v_{i_k+1}$};
\node[scale = 0.75] at (1.614,-1.614) {$v_{i_k}$};
\node[scale = 0.75] at (1.614,1.614) {$v_{i_l}$};
\node[scale = 0.75] at (1.3,1.86) {$v_{i_l+1}$};

\draw[line width = 1pt] (1.1,1.66) to (1.7,-1.07);
\draw[line width = 1pt] (8,0) to (1.414,1.414);
\draw[line width = 1pt] (8,0) to (1.414,-1.414);
\draw[red, opacity = 0.3,line width = 4pt] (8,0) to (1.414,1.414);
\draw[red, opacity = 0.3,line width = 4pt] (8,0) to (1.414,-1.414);
\draw[red, opacity = 0.3,line width = 4pt] (1.1,1.66) to (1.7,-1.07);




    \end{tikzpicture}
     \caption{An illustration of how an edge between two elements $v_{i_k+1},v_{i_l+1}$ of $S$ can be used to construct a new $C^{r}_{l'}$.} \label{fig:erdoschvatalswitch}

\end{figure}
\noindent We finish this section with the following partitioning lemma - it will allow us to transform even cycles found by standard density considerations into odd cycles.
\begin{lem}\label{lem:partition}
Let $G$ be an $n$-vertex graph with $\kappa(G) > \alpha(G)$. Then, there exists $X \subseteq V(G)$ and a set of edges $E$ contained in $G[X]$ such that the following hold.
\begin{itemize}
    \item $|E| \geq \left(\frac{\kappa(G) - \alpha(G)}{8} \right) n$.
    \item For every edge $e = xy \in E$ there is a $z \in V(G) \setminus X$ such that $xzy$ is a triangle in $G$.
\end{itemize}
\end{lem}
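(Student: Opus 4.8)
\noindent The plan is to exploit that the hypothesis forces $G$ to be \emph{locally dense}, convert this into a large supply of edges that lie in triangles, and then pick $X$ at random so that a fixed fraction of those edges lands inside $G[X]$ while a witnessing third vertex stays outside $X$.

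First I would record the local density. Since $\kappa(G)>\alpha(G)$ we have $\delta(G)\ge\kappa(G)>\alpha(G)$, so for every $v$ the graph $G[N(v)]$ has independence number at most $\alpha(G)<|N(v)|$. Taking a maximal independent set $I$ in $G[N(v)]$, every vertex of $N(v)\setminus I$ has a neighbour in $I$, so $G[N(v)]$ has at least $|N(v)|-\alpha(G)$ edges and at least $|N(v)|-\alpha(G)$ of its vertices are non-isolated; hence at least $\deg(v)-\alpha(G)$ of the edges at $v$ lie in a triangle. Summing over $v$ and halving, the number $t$ of edges of $G$ lying in at least one triangle satisfies $t\ge\tfrac12\sum_v(\deg(v)-\alpha(G))=e(G)-\tfrac{\alpha(G)n}{2}\ge\tfrac{(\kappa(G)-\alpha(G))n}{2}$. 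Two refinements will be useful for pinning down the constant: $\sum_v e(G[N(v)])=3\cdot(\#\text{triangles})\ge n(\kappa(G)-\alpha(G))$, and, by Tur\'an's theorem applied to the $K_{\alpha(G)+1}$-free complement of $G[N(v)]$, $e(G[N(v)])\ge\tfrac{\deg(v)(\deg(v)-\alpha(G))}{2\alpha(G)}$.

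Next, the random choice: put each vertex of $G$ into $X$ independently with probability $p$, to be optimised. For an edge $e=xy$ lying in a triangle, write $c_e=|N(x)\cap N(y)|\ge1$; then $x$, $y$ and the $c_e$ common neighbours are $c_e+2$ distinct vertices, so $\Pr[x,y\in X$ and $(N(x)\cap N(y))\not\subseteq X]=p^2(1-p^{c_e})\ge p^2(1-p)$. Let $E$ be the set of edges $xy$ of $G[X]$ for which $x$ and $y$ have a common neighbour in $V(G)\setminus X$; by construction $E\subseteq G[X]$ and each $xy\in E$ has a vertex $z\in V(G)\setminus X$ with $xzy$ a triangle, so it only remains to make $|E|$ large. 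Summing the bound above over all edges of $G$ in a triangle, $\mathbb E|E|=\sum_e p^2(1-p^{c_e})\ge p^2(1-p)\,t$, and fixing an outcome with $|E|\ge\mathbb E|E|$ settles the lemma provided the right-hand side reaches $\tfrac{(\kappa(G)-\alpha(G))n}{8}$.

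The constant is the only real obstacle, since crudely $\mathbb E|E|\ge\big(\max_p p^2(1-p)\big)\tfrac{(\kappa-\alpha)n}{2}=\tfrac{2(\kappa-\alpha)n}{27}$, just short of $\tfrac18(\kappa-\alpha)n$. The remedy is to use the codegrees rather than just their positivity. Writing $t_1$, $t_2$ for the number of triangle-edges with $c_e=1$ and with $c_e\ge2$, one has $\mathbb E|E|\ge p^2\big((1-p)t_1+(1-p^2)t_2\big)$, and this is played off against the counting as follows. If the codegrees are typically $1$, i.e.\ $t_2$ is small, then $t$ is essentially $\sum_v e(G[N(v)])=3\cdot(\#\text{triangles})\ge n(\kappa-\alpha)$, and $p=\tfrac12$ already gives $\mathbb E|E|\ge\tfrac18 t\ge\tfrac{(\kappa-\alpha)n}{8}$. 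If many triangle-edges have large codegree, then $t_2$ is large — and one has extra leverage from the fact that an edge of codegree $c$ forces at least $2c$ further distinct triangle-edges (the other two sides of each of its $c$ triangles), and that high-codegree edges cluster into dense spots which contain quadratically many edges — so that either the $(1-p^2)t_2$ term, or simply taking $p$ closer to $1$ (making $p^{c_e}$ negligible on the abundant high-codegree edges while $p^2$ stays near $1$), carries $\mathbb E|E|$ past $\tfrac{(\kappa-\alpha)n}{8}$. Assembling these cases is routine bookkeeping once the inequalities between $t$, $t_2$ and the number of triangles are in place, and that bookkeeping is where essentially all of the work lies.
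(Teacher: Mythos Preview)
Your framework---choose $X$ randomly and keep those edges of $G[X]$ with a common neighbour outside $X$---is exactly the paper's. The argument up to $\mathbb E|E|\ge p^2(1-p)\,t\ge \tfrac{2(\kappa-\alpha)n}{27}$ is clean and correct, and for the paper's application any positive constant would do. But as a proof of the lemma \emph{as stated} there is a genuine gap: you never actually reach the constant $1/8$. Your ``remedy'' paragraph is a plan, not an argument---you split into $t_1,t_2$ and assert that the two cases can be balanced, then explicitly defer the work (``that bookkeeping is where essentially all of the work lies''). In fact the bookkeeping does not go through from the data you retain. Your first case needs not merely $t_2$ small but $\sum_{e:c_e\ge2}c_e$ small to conclude $t\approx 3T$, and your second case's heuristic that high-codegree edges generate many extra triangle edges does not by itself convert into a lower bound on $\sum_e p^2(1-p^{c_e})$; pushing $p$ toward $1$ kills the $c_e=1$ contribution. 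The per-vertex information $e(G[N(v)])\ge \kappa-\alpha$, which you record in the first paragraph and then abandon, is exactly what is needed to rule out the bad configurations.

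The paper avoids the whole codegree discussion with one extra idea: instead of marking all triangle edges, for each vertex $v$ it fixes a \emph{matching} $M_v$ in $G[N(v)]$ of size at least $(\kappa-\alpha)/2$ (a maximal matching works, since the uncovered vertices are independent). An edge $xy$ is marked when $y$ is covered by $M_x$ (or symmetrically); the matching partner of $y$ is then a \emph{designated} witness $z$, so $\Pr[x,y\in X,\ z\notin X]=\tfrac18$ flat, with no dependence on $c_e$. Counting marked edges gives $\sum_v|M_v|\ge n(\kappa-\alpha)/2$, and one multiplies by $1/8$. So the step you are missing is to upgrade your ``at least $\deg(v)-\alpha$ non-isolated vertices in $N(v)$'' to ``a matching of size $(\deg(v)-\alpha)/2$ in $N(v)$''; that single move replaces your codegree case analysis entirely. (Tracing constants, the paper's computation literally yields $nr/8=(\kappa-\alpha)n/16$ rather than $(\kappa-\alpha)n/8$, so the stated $1/8$ is off by a harmless factor of two there as well; but the matching trick is the substantive point.)
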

\begin{proof}
First, since every vertex set in $G$ of size at least $\alpha(G)+1$ contains an edge, every vertex $v$ in $G$ is such that its neighbourhood $N(v)$ contains a matching of size at least $\frac{\delta(G) - \alpha(G)}{2} \geq \frac{\kappa(G) - \alpha(G)}{2}$ - let $r$ denote this quantity. For each $v$, fix such a matching $M_v$.

Now, let $X$ be a random subset of $V(G)$ where each vertex is chosen independently at random with probability $1/2$. Let $E$ denote the set of edges $e = xy$ with the following property: $x,y \in X$ and there is some $z \notin X$ such that $yz \in M_x$ or $xz \in M_y$. Clearly, $E$ satisfies the second condition of the lemma. We need only to estimate the expected value of $|E|$ in order to prove than the first condition is satisfied for some $X$. Indeed, note that for an edge $e = xy$ to be present in $E$ we must have that there is some $z$ such that $yz \in M_x$ or $xz \in M_y$. Further, if at least one of these options holds, it is clear that then $\mathbb{P}(e \in E) \geq \frac{1}{8}$; since that is the probability that $x,y\in X$ and $z\notin X$. To finish, note that the number of such edges is at least $\frac{1}{2}\sum_v 2|M_v| = \sum_v |M_v| \geq nr$.
Indeed, for each vertex $x\in G$, every vertex $y$ in the matching $M_x$, gives such an edge $xy$, but since we possibly double counted ($x$ might be in the matching $M_y$), the total number of such edges is at least $\frac{1}{2}\sum_v 2|M_v|$.
Hence, $\mathbb{E}[|E|] \geq nr/8$, so there must exist such an $E$ with $|E| \geq nr/8$ as desired.
\end{proof}

\subsection{Path shortening/augmenting tools}
In this section, we describe some tools for shortening paths. First, we show the following lemma which uses only the minimum degree of the graph.
\begin{lem}\label{lem:jumpwithdegree}
Let $G$ be an $n$-vertex graph, $\delta := \delta(G)$ and $P$ a path in $G$ with endpoints $x,y$ such that $|P| > 20n/\delta$. Then there is an 
$xy$-path $P'$ such that $|P| - 20n/\delta \leq |P'| < |P|$.
\end{lem}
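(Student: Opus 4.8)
The goal is to shorten a long path $P = (x_0, x_1, \dots, x_\ell)$ with $x_0 = x$, $x_\ell = y$, by a controlled amount (between $1$ and $20n/\delta$), using only the minimum degree. The plan is to find two vertices $x_i$ and $x_j$ on $P$ with $i < j$, that are \emph{close together along $P$} but \emph{adjacent in $G$}; then $P' = (x_0, \dots, x_i, x_j, x_{j+1}, \dots, x_\ell)$ is an $xy$-path of length $\ell - (j - i - 1)$, which is shorter by $j-i-1 \geq 1$ and, if $j - i - 1 \leq 20n/\delta$, by the required amount. Thus it suffices to show that among the first, say, $20n/\delta$ internal steps of $P$ there must be a ``short chord''.

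The key step is a counting/pigeonhole argument on neighborhoods. Partition an initial segment $x_0, x_1, \dots, x_m$ of $P$ (with $m$ a bit larger than $20n/\delta$, chosen so that $P$ is long enough) into consecutive blocks $B_1, B_2, \dots$ each of length roughly $t := 10 n/\delta$ (so a constant number of blocks suffice to cover the segment, but actually we only need a few). For each block $B_k$, consider a representative vertex $v_k$ in it; since $\deg(v_k) \geq \delta$, the set $N(v_k)$ is large. If $N(v_k)$ meets the \emph{same} block $B_k$ or the \emph{next} block $B_{k+1}$ at a vertex that is not a path-neighbor of $v_k$, we get a chord of length at most $2t \leq 20n/\delta$ and we are done. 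So assume no such short chord exists: then for every such representative, $N(v_k)$ is essentially disjoint from $B_k \cup B_{k+1}$ — more carefully, one wants to arrange the argument so that the neighborhoods of many well-separated vertices on $P$, together with small ``buffer'' intervals around them, are forced to be pairwise disjoint subsets of $V(G)$, which is impossible once the number of such vertices times $\delta$ exceeds $n$.

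Concretely, I would take vertices $y_1, y_2, \dots, y_s$ on $P$ that are pairwise at distance more than $20n/\delta$ along $P$, with $s = \lceil n/\delta \rceil + 1$ (this is possible since $|P| > 20n/\delta \cdot s$ would be needed — one should double-check the constant $20$ is generous enough, which is exactly why the hypothesis is $|P| > 20n/\delta$ and one only needs a handful of such $y_i$'s; in fact $s$ can be taken as small as $2$ if the blocks are sized right, and the factor $20$ leaves ample room). If no short chord exists, then each $N(y_i)$ avoids the open interval of $P$ of radius $10n/\delta$ around $y_i$ except for the two path-edges at $y_i$; comparing $\sum_i |N(y_i)| \geq s\delta > n$ against $|V(G)| = n$ via inclusion–exclusion forces two neighborhoods $N(y_i), N(y_j)$ to share a vertex $w$, or a neighborhood to revisit $P$ near itself — either way one extracts a rerouting of $P$ that is shorter by at most $20n/\delta$. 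The main obstacle is bookkeeping: making the disjointness/overlap dichotomy clean enough that whichever case occurs yields a \emph{shortening} (not a lengthening) by \emph{at most} $20n/\delta$, and handling the degenerate case where the relevant neighbors of $y_i$ lie off the path entirely — there one uses that such an off-path neighbor $w$ has $\deg(w) \geq \delta$ and reroutes through $w$, or simply notes that if $N(y_i)$ is disjoint from a length-$\Theta(n/\delta)$ stretch of $P$ for enough indices $i$ we already exceed $n$ vertices. I expect the constant $20$ is chosen precisely to absorb all these buffers with room to spare.
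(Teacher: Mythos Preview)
Your plan has the right flavor (count edges out of a short initial segment of $P$ and argue they cannot all avoid creating a short reroute), but as written it does not close. There are two concrete gaps.

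First, your counting in the third paragraph does not match the hypothesis. To place $s=\lceil n/\delta\rceil+1$ vertices at pairwise $P$-distance greater than $20n/\delta$ you would need $|P|>20n/\delta\cdot s\approx 20n^2/\delta^2$, whereas the lemma only assumes $|P|>20n/\delta$. Dropping to $s=2$ makes the placement possible but then $\sum_i|N(y_i)|\ge 2\delta$ need not exceed $n$, so the pigeonhole step fails.

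Second, and more seriously, even when two far-apart vertices $y_i,y_j$ on $P$ share a neighbour $w$ (on or off $P$), rerouting through $w$ shortens $P$ by roughly the $P$-distance between $y_i$ and $y_j$, which in your setup is \emph{large} (more than $20n/\delta$), not small. So the dichotomy ``short chord or overlapping neighbourhoods'' does not deliver a shortening bounded by $20n/\delta$; you only get \emph{some} shortening. The same issue lurks in your second paragraph: you never explain why neighbourhoods of representatives in different blocks would be pairwise disjoint, and indeed they need not be --- but the overlap you would find connects distant blocks and again shortens by too much.

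The missing idea is how to handle edges from the first block $Q_1$ to a distant block $Q_i$ without over-shortening. The paper's proof fixes a sparse subset $Q_1'\subseteq Q_1$ (vertices at pairwise $P$-distance $\ge 3$) with $|Q_1'|\ge 3n/\delta$, so that $e(Q_1',V(G))\ge 3n$. If no desired $P'$ exists, it then bounds $e(Q_1',Q_1)\le 2|Q_1'|$, bounds $e(Q_1',V(G)\setminus P)\le n-|P|$ (else some off-path vertex has two neighbours in $Q_1'$, giving a reroute of bounded excess), and crucially shows $e(Q_1',Q_i)<|Q_1'|+|Q_i|$ for each later block $Q_i$: otherwise the bipartite graph between $Q_1'$ and $Q_i$ contains a cycle, hence two \emph{crossing} chords $a_1b_1,a_2b_2$ with $a_1<_P a_2$ in $Q_1'$ and $b_1<_P b_2$ in $Q_i$, and the path $x\to a_1\to b_1\to a_2\to b_2\to y$ shortens $P$ by at most $|Q_1|+|Q_i|\le 20n/\delta$. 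Summing these bounds yields $e(Q_1',V(G))<2n$, a contradiction. It is this crossing-chords trick (turning many edges into a bounded shortening) that your plan is missing.
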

\begin{proof}
Suppose for sake of contradiction that no such path $P'$ exists. Let $P := v_1 v_2 \ldots v_{l-1} v_l$ with $v_1 = x, v_l = y$ and let $<_P$ denotes the given ordering of the path $P$ as $v_1 <_P v_2 <_P  \ldots <_P v_l$. Since $|P| > 10n/ \delta$, we can partition $P$ into sub-paths $Q_1,Q_2, \ldots, Q_k$ such that $|Q_k| \leq 10n/\delta$ and $|Q_i| = 10n/ \delta$ for all $i < k$. Moreover, we have $k = \left\lceil \frac{|P|}{10n/\delta}\right\rceil$. Now, consider the vertices in $Q_1$ and take a subset $Q'_1 \subseteq Q_1$ of size $\lfloor |Q_1|/3 \rfloor \geq 3n/\delta$ such that no two vertices in $Q'_1$ are at distance at most $2$ in $P$. Consider then the set of edges incident to $Q'_1$, that is, $E[Q'_1,V(G)]$; by the minimum degree condition, there are at least $|Q'_1| \cdot \delta \geq 3n$ such edges.

Now, clearly there cannot exist an edge spanned by $Q_1$ which does not belong to $P$ since this edge could be used to shorten $P$ by at most $|Q_1| \leq 10 n/\delta$. Hence, $e(Q'_1, Q_1) \leq 2|Q'_1|$. Similarly, the following must hold.
\begin{claim*}
$e(Q'_1, V(G) \setminus P) \leq n-|P|$.
\end{claim*}
\begin{proof}
Suppose otherwise. Then there is a vertex $v \in V(G )\setminus P$ with at least $2$ neighbours in $Q'_1$ - denote these by $u,w$. Note that since by construction $u,w$ are at distance at least $2$ and at most $|Q_1| \leq 10n/\delta$ in $P$, this is a contradiction, since it produces the desired $P'$ by substituting the sub-path of $P$ between $u$ and $w$ by the path $uvw$.
\end{proof}
\noindent To give an upper bound on the total number of edges incident to $Q'_1$ which are contained in $V(P)$, we also use the following claim.
\begin{claim*}
For all $i > 1$, we have $e(Q'_1,Q_i) < |Q'_1| + |Q_i|$.
\end{claim*}
\begin{proof}
Suppose otherwise. This implies that there is a cycle in $G[Q'_1,Q_i]$ and hence, there must exist two crossing edges in this bipartite graph, that is, edges $a_1b_1$ and $a_2b_2$, with $a_1 <_P a_2$ and both in $Q_1'$, and $b_1 <_P b_2$ both in $Q_i$. These can clearly be used to shorten $P$ (see Figure \ref{fig:crossingedges}) by at most $|Q_1|+|Q_i| \leq 20n/ \delta$, which is a contradiction as it produces the desired $P'$.
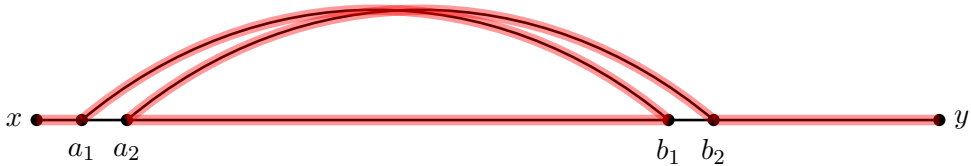
\begin{figure}[b]
\begin{tikzpicture}[scale = 0.6]
\draw[line width=1pt] (-10,0) to (10,0);
\node (y) at (10.5,0) {$y$};
\node[scale = 4] (y) at (10,0) {$.$};
\node[scale = 4] (x) at (-10,0) {$.$};
\node at (-10.5,0) {$x$};
\node[scale = 1] at (-9,-0.7) {$a_1$};
\node at (-8,-0.7) {$a_2$};
\node[scale = 1] at (5,-0.7) {$b_2$};
\node at (4,-0.7) {$b_1$};
\node[scale = 4] (a1) at (-9,0) {$.$};
\node[scale = 4] (a2) at (-8,0) {$.$};
\node[scale = 4] (b2) at (5,0) {$.$};
\node[scale = 4] (b1) at (4,0) {$.$};
\draw[line width=1pt] (-9,0) to [bend left=40](4,0);
\draw[line width=1pt] (-8,0) to [bend left=40](5,0);
\draw[opacity = 0.4, red, line width=4pt] (-9,0) to [bend left=40](4,0);
\draw[opacity = 0.4, red, line width=4pt] (-8,0) to [bend left=40](5,0);
\draw[opacity = 0.4, red, line width=4pt] (-10,0) to (-9,0);
\draw[opacity = 0.4, red, line width=4pt] (-8,0) to (4,0);
\draw[opacity = 0.4, red, line width=4pt] (5,0) to (10,0);
\end{tikzpicture}
    \centering
    \caption{Shortening of the path $P$ using the crossing edges $a_1b_1$ and $a_2b_2$. The resulting path is $P'$ and is drawn in red.}
    \label{fig:crossingedges}
\end{figure}
\end{proof}
The above claim implies that $$\sum_{i > 1} e(Q'_1,Q_i) < \sum_{i > 1} \left(|Q'_1| + |Q_i| \right) \leq (k-1)|Q'_1| + (|P| - |Q_1|) < 2|P| - 2|Q'_1|.$$ To conclude, we now must have the following
$$e(Q'_1,V(G)) = e(Q'_1, Q_1) +  e(Q'_1, V(G) \setminus P) + \sum_{i > 1} e(Q'_1,Q_i) < 2|Q'_1| + (n-|P|) + (2|P| - 2|Q'_1|) < 2n .$$
which contradicts the previous observation that $e(Q'_1,V(G)) \geq 3n$.
\end{proof}
Conversely, the following lemma gives a way to shorten a path using only its independence number. It was proven in \cite{ourpaper} and was used to solve an old conjecture of Erd\H{o}s \cite{erdos1972some} - see Proposition 2.9 in \cite{ourpaper} and let $U=\emptyset$ and $c=\frac{\ceil{20\alpha^2/{|P|}}+3}{4}$.
\begin{lem}\label{lem:jumpwithzigzag}
Let $G$ be an $n$-vertex graph with independence number $\alpha$, let $P$ be a path in $G$ with endpoints $x,y$ such that $|P| > 4 \alpha$. Then there is an 
$xy$-path $P'$ such that $|P| - \lceil 20\alpha^2/|P| \rceil \leq |P'| < |P|$.
\end{lem}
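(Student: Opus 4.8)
The plan is to prove \Cref{lem:jumpwithzigzag} by quoting the external result it is derived from in \cite{ourpaper}, which is what the surrounding text signals: the statement is essentially a specialization of Proposition~2.9 of \cite{ourpaper}. So the main task is to set up the reduction correctly rather than to reprove anything from scratch.

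First I would recall the hypothesis: $G$ is an $n$-vertex graph with independence number $\alpha$, and $P$ is an $xy$-path with $|P| > 4\alpha$. The target is an $xy$-path $P'$ whose length is strictly smaller than $|P|$ but drops by at most $\lceil 20\alpha^2/|P|\rceil$; in other words, the set of attainable $xy$-path lengths is ``dense'' just below $|P|$ with gap parameter $\lceil 20\alpha^2/|P|\rceil$. I would then invoke Proposition~2.9 of \cite{ourpaper} with the parameter choices indicated in the excerpt, namely $U = \emptyset$ and $c = \tfrac{\lceil 20\alpha^2/|P|\rceil + 3}{4}$. One needs to check that these choices are admissible: that $c$ satisfies whatever positivity/size constraints Proposition~2.9 imposes (here $c \ge 3/4 > 0$ automatically, and $|P| > 4\alpha$ translates into the required lower bound on the path length relative to $\alpha$ and $c$), and that with $U = \emptyset$ the conclusion of Proposition~2.9 specializes exactly to the existence of an $xy$-path of length in the window $[|P| - 4c, |P|)$, i.e. $[|P| - (\lceil 20\alpha^2/|P|\rceil + 3), |P|)$. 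Since $4c = \lceil 20\alpha^2/|P|\rceil + 3$ may look slightly larger than the claimed gap $\lceil 20\alpha^2/|P|\rceil$, I would reconcile this: either Proposition~2.9 already gives the sharper window, or one iterates/bootstraps the weaker statement a constant number of times, or (most likely) the $+3$ slack is absorbed because the zig-zag shortening argument of \cite{ourpaper} actually produces a path strictly shorter with the tighter bound once $U=\emptyset$.

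If instead a self-contained argument were wanted, I would reproduce the idea: take a maximal set of ``jump'' opportunities along $P$. Writing $P = v_0 v_1 \dots v_{|P|}$, consider for each vertex $v_i$ its neighbours further along $P$; because $\alpha(G) = \alpha$, among any $\alpha+1$ vertices there is an edge, so chords of $P$ are plentiful. A single chord $v_iv_j$ with $i<j$ shortens $P$ by $j-i-1$, but to get \emph{every} intermediate length one combines two chords in a ``zig-zag'': picking chords $v_iv_j$ and $v_kv_\ell$ with $i<k\le j<\ell$ lets one reroute and obtain a path of length $|P| - (j-i) - (\ell-k) + (\text{overlap terms})$, and by choosing the overlap one tunes the new length within a controlled range. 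The quantitative heart is a counting/pigeonhole estimate showing that if no $xy$-path of length in the forbidden window $(|P|-\lceil 20\alpha^2/|P|\rceil,\,|P|)$ existed, then chords would be forced to avoid too many positions, leaving an independent set of size $> \alpha$ — a contradiction. The factor $20\alpha^2/|P|$ arises exactly from balancing $\alpha$ independent constraints against the $|P|$ available positions.

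The main obstacle, in the self-contained route, is precisely this last counting step: organizing the ``forbidden lengths force structure'' argument so that the excluded positions genuinely assemble into an independent set (this is where the Gallai--Milgram theorem enters in \cite{ourpaper}, covering the path-complement by few paths and extracting an independent transversal), and getting the constant to come out as $20$ rather than something larger. In the quotation route, the only real obstacle is bookkeeping: verifying that the parameter translation $c = \tfrac{\lceil 20\alpha^2/|P|\rceil+3}{4}$, $U=\emptyset$ is within the hypotheses of Proposition~2.9 and that its conclusion, after substitution, yields exactly the window claimed here (up to the harmless additive constants, which can be cleaned up by a final one-line iteration if needed). I would present the quotation route as the proof, since the excerpt explicitly tells the reader to do so.
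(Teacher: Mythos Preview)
Your proposal is correct and matches the paper exactly: the paper does not give a self-contained proof of this lemma but simply cites Proposition~2.9 of \cite{ourpaper} with the parameter choices $U=\emptyset$ and $c=\frac{\lceil 20\alpha^2/|P|\rceil+3}{4}$, which is precisely the quotation route you propose. Your additional discussion of the $+3$ slack and the self-contained zig-zag sketch go beyond what the paper itself provides, but the core approach is identical.
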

\noindent We finish this section with a lemma which contrarily to the previous lemmas, will allow us to slightly augment a path between two vertices. Further, it will use both the connectivity and the independence number of the graph, and it will be used when the size of the path $P$ we are considering is not suitable to apply the first two lemmas of this subsection.
\begin{lem}\label{lem:augmentingpath}
Let $G$ be an $n$-vertex graph with connectivity $\kappa$ and independence number $\alpha$, and let 
$r\in \mathbb N$. Let $P$ be a path in $G$ with endpoints $x,y$ and with $|P| < n$. Then, there is an $xy$-path $P'$ such that $|P| < |P'| \leq |P| + r$ provided that $|P| > \frac{80 \alpha}{r}$, and $\alpha> r > \frac{80 \alpha}{r} \cdot\max\left(1, \frac{|P|}{\kappa - \alpha } \right)$, while $\kappa>\alpha+2r$.
\end{lem}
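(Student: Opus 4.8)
The plan is to augment $P$ by rerouting it through a short path that leaves $P$, picks up a few extra vertices, and returns, thereby increasing the length by a controlled amount between $1$ and $r$. Write $P = v_1 v_2 \ldots v_m$ with $v_1 = x$, $v_m = y$. First I would select a well-spread family of vertices on $P$: choose a subset $W \subseteq V(P)$ of size roughly $|P|/(3r)$ such that any two vertices of $W$ are at distance more than $2r$ apart along $P$ (this is possible since $|P| > 80\alpha/r > r$, so $|W| \geq |P|/(3r) \ge $ something comparable to $\alpha$, and in fact the hypotheses are arranged so that $|W|$ exceeds $\alpha$). The point of the spreading is that if we find two vertices $a <_P b$ in $W$ joined by a detour of the right length through $V(G) \setminus V(P)$, then replacing the sub-path of $P$ between $a$ and $b$ with that detour changes the length by (detour length) minus (distance along $P$ between $a$ and $b$), and by choosing the detour appropriately this net change lands in $\{1, \ldots, r\}$.

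Since $|W| > \alpha$, the set $W$ contains an edge, or more usefully, I would look at the vertices immediately following each $w \in W$ on $P$ — call this set $W^+$, also of size $> \alpha$ — together with the set $V(G) \setminus V(P)$ which is nonempty since $|P| < n$. The connectivity hypothesis $\kappa > \alpha + 2r$ is exactly what lets us run a Menger-type argument: for a vertex $u \notin V(P)$, there are more than $\alpha$ internally disjoint $u$–$W^+$ paths avoiding any prescribed set of $2r$ "bad" vertices, so we get many disjoint attachments of outside vertices onto $P$ near the spread-out points of $W$. The independence number then forces an edge among the relevant endpoints, and — as in the proof of Theorem~\ref{ECwithtriangle} and Lemma~\ref{lem:partition} — such an edge can be used to reroute. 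The extra condition $r > \frac{80\alpha}{r}\max(1, \frac{|P|}{\kappa-\alpha})$ is what guarantees the detour is long enough (has enough internal vertices available, controlled by $n/(\kappa - \alpha)$ via a path-cover / density estimate on $V(G)\setminus V(P)$) to overshoot the removed segment by at least $1$, while the spreading condition "distance $> 2r$" keeps the overshoot at most $r$.

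More concretely, the mechanism I expect to use: fix $u \notin V(P)$. Using $\kappa > \alpha + 2r$, find $\alpha + 1$ internally disjoint paths from $u$ into $W \cup W^+$ (so that consecutive structure on $P$ is available), disjoint outside $u$; their endpoints on $P$, shifted by one along $P$, form an independent-number-exceeding set, hence span an edge $e$. Because the two endpoints of $e$ sit next to well-separated points of $W$, the closed walk formed by: the edge $e$, the two path-segments back to $u$, and one of the two arcs of $P$ between the relevant $W$-vertices, yields a new $x$–$y$ path whose length exceeds $|P|$; and a careful choice among the (few) configurations, using that the attachment paths are short relative to $r$, pins the increase into $[1, r]$. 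The main obstacle — and the place where I'd expect to spend the real work — is simultaneously controlling \emph{both} ends of the inequality $|P| < |P'| \le |P| + r$: getting a strictly positive increase requires the detour to contain a genuinely new vertex (needing the lower bounds $|P| > 80\alpha/r$ and the $\frac{|P|}{\kappa-\alpha}$ term to ensure enough room outside $P$), while capping the increase at $r$ forces the detour and the bypassed arc to have comparable, small length, which is where the $2r$-separation of $W$ and the precise constant $80$ get used. Threading these two requirements through a single rerouting — rather than, say, iterating single-vertex insertions, which would not obviously respect the upper bound — is the delicate point.
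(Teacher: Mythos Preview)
Your plan has the augmentation mechanism backwards, and the central counting claim fails under the stated hypotheses.

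First, on the spreading. You propose $W\subseteq V(P)$ with consecutive points more than $2r$ apart, so $|W|\le |P|/(2r)$, and you assert that the hypotheses force $|W|>\alpha$. They do not: the only lower bound on $|P|$ is $|P|>80\alpha/r$, which gives at best $|W|>40\alpha/r^2$; since also $r>\sqrt{80\alpha}$ (from $r>80\alpha/r$), this is bounded by an absolute constant, nowhere near $\alpha$. In the actual application of the lemma the path starts with $|P|\approx r$, so $|W|\approx 1$. The spreading idea is the device from the \emph{shortening} lemma (Lemma~\ref{lem:jumpwithdegree}), where one wants to cap how much length is \emph{lost}; for augmentation one needs the opposite geometry --- attachment points that are \emph{close together} on $P$ so that the removed arc is short while the inserted detour is long.

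Second, and more fundamentally, the edge you hunt for (among successors $W^+$ on $P$) cannot by itself control the \emph{gain} to lie in $[1,r]$: an edge between two successors on $P$, combined with Menger paths through $u$, gives an $xy$-path whose length depends on the lengths of those Menger paths, over which you have no a priori control. The paper handles this by looking for the crucial edge \emph{not} on $P$ but \emph{inside the Menger paths themselves}, at a prescribed window of distance (between $r/4$ and $r/2$) from $P$. Concretely: one first reduces (via your successor-edge idea, which the paper does use as a quick sub-case) to the situation where many of the Menger paths from $u$ have length at least $r/2$; then, by averaging, at least $10\alpha/r$ of them land in a short interval of $P$ of length $t<r/2$; taking an independent set of size $r/8$ in each such path at depth $r/4$ to $r/2$ produces more than $\alpha$ vertices off $P$, hence an edge between two of the paths. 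Routing through that edge adds between $r/2$ and $r$ and removes at most $t<r/2$, pinning the net change in $(0,r]$. The parameter $\frac{80\alpha}{r}\cdot\frac{|P|}{\kappa-\alpha}$ is exactly the length $t$ of that short interval, which explains the hypothesis you could not place. This ``edge at controlled depth inside the fan'' is the idea your outline is missing.
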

\begin{proof}
Consider a vertex $u$ not contained in $P$ and denote $P$ as $v_1v_2 \ldots v_\ell$ with $x=v_1,y=v_\ell$. By Menger's theorem, there exist $\min(\kappa,|P|)$ paths going from $u$ to $V(P)$ which are vertex-disjoint apart from the vertex $u$. 
Let $S \subseteq V(P)$ be the endpoints of these paths, and for each $v_i\in S$ let $P_i$ denote the corresponding path from $u$ to $v_i$.

We first consider the case when $S = V(P)$. Note that if for all $i$, since $v_i,v_{i+1}$ are consecutive in $P$, we can substitute the edge $v_iv_{i+1}$ by the paths $P_i,P_{i+1}$ to form an $xy$-path of length $|P| + |P_i|+|P_{i+1}|$. Hence, if $|P_i|+|P_{i+1}| < r$ for some $i$, then we have constructed the desired $P'$. Otherwise, at least half of the paths $P_i$ with $i\leq \frac{20\alpha}{r}$ have $|P_i| \geq r/2$. Moreover, we can assume that the $P_i$ are induced paths since if not, their length can be shortened. Let $S'$ be the set of vertices $v_i$ which are the endpoints of those paths, and note that $|S'|\geq \frac{10\alpha}{r}$. For each such $P_i$, let $Q_i$ denote the subpath of $P_i$ formed by its $r/4$ vertices in positions $r/4+1,\ldots, r/2$, viewed in the direction $v_i \rightarrow u$. Since $Q_i$ is an induced path, it contains an independent set $I_i$ of size $|Q_i|/2 \geq r/8$. 
Then we have 

$$\left| \bigcup_{v_i \in S'} I_{i} \right| \geq |S'|\frac{r}{8}> \alpha,$$
hence there is an edge $(u_a,u_b)$ between $I_a$ and $I_b$ for some $v_a,v_b\in S'$. This now completes the proof, as we can replace the part of the path in $P$ between $v_a$ and $v_b$ by the path obtained by concatenating the $v_au_a$-path in $P_a$, the edge $u_au_b$ and the $u_bv_b$-path in $P_b$, thus obtaining a path of length at least $|P|+2\cdot r/4-\frac{20\alpha}{r}> |P|$ and at most $|P|+2\cdot\frac{r}{2}$ which completes this case.


Let us now consider the case when $|S| = \kappa$. First we show the following simple claim.
\begin{claim*}
If at least $\alpha+1$ paths $P_i$ are such that $|P_i| < r/2$, then such a $P'$ exists.
\end{claim*}
\begin{proof}
For each one of the endpoints $v_i\in V(P)-\{y\}$ of the paths $P_i$, let $v_i'$ denote its neighbour on $P$ which is closer to $y$. Let $X$ be the set of those at least $\alpha$ vertices, together with the vertex $u$. Then there is an edge between two vertices in $X$. 
This gives an $xy$-path which is strictly longer than $P$, but by at most $r$ (see Fig. \ref{fig:erdoschvatalswitch} for an illustration of an analogous operation).
\end{proof}
By the above claim, we can assume that at least $\kappa - \alpha$ vertices $v_{i_j} \in S$ are such that $|P_{i_j}| \geq r/2$ - and moreover, we can assume that they are induced paths (since otherwise they can be shortened). Let $S'$ be the set of those vertices in $S$, so that $|S'| \geq \kappa - \alpha$. Now, by letting $t=\frac{20\alpha|P|}{r(\kappa-\alpha)}$ we conclude by averaging that $P$ contains an interval $Q$ of length $t$ with at least $\frac{t}{2|P|}(\kappa-\alpha)=\frac{10\alpha}{r}$ vertices in $S'$. By repeating the argument above -- finding the independent sets $I_i\subset P_i$  for each of the $\frac{10\alpha}{r}$ paths $P_i$ which end in $Q$, and then finding an edge between a pair $I_i$ and $I_j$ -- we get a path $P'$ of length at least $|P|-|Q|+2\cdot \frac{r}{4}\geq |P|-t+\frac{r}{2}>|P|$ by our assumption on $r$, and length at most $|P|+2 \cdot \frac{r}{2}$, which completes the last case of the proof.

\end{proof}


\section{Proof of Theorem \ref{thm:main}}\label{sec:thm}
Let $G$ be a graph on $n$ vertices, let $\alpha$ denote its independence number and $\kappa$ its connectivity number. Let $\eps > 0$ and for convenience we may assume that $\varepsilon$ is sufficiently small so that all our calculations go through. Suppose that $n$ is sufficiently large in terms of $\eps$ and that $\kappa \geq (1+ \eps) \alpha$. This immediately implies that $\alpha$ is also sufficiently large in terms of $\eps$ since otherwise, we would have $n \geq 4 (\alpha+1)^4$ which by an old result of Erd\H{o}s \cite{erdos1972some} would already imply pancyclicity.

\subsection*{Upper range: $\min \left(\frac{10^5n}{\varepsilon^2\kappa},\frac{100\alpha}{\varepsilon} \right)$ to $n$}

\noindent We will first construct cycles of all lengths from $m:=\min \left(\frac{10^5n}{\varepsilon^2\kappa},\frac{100\alpha}{\varepsilon} \right)$ to $n$. First, apply Theorem \ref{ECwithtriangle} to $G$, which implies that it contains a $C^{r_1}_n$ with $r_1 =  \eps \alpha/2$. Note that if $m=\frac{10^5n}{\varepsilon^2\kappa}$, then we also have $r_1 \geq \frac{100n}{\kappa}=:r_2$, since in that case $\frac{10^5n}{\varepsilon^2\kappa}\leq \frac{100\alpha}{\varepsilon} $. Hence, in that case $G$ trivially contains $C^{r_2}_\ell$.

Now, let us denote the Hamilton cycle in $C^{r}_n$ by $v_1v_2 \ldots v_nv_1$, with the edges $v_1v_3,v_3v_5, \ldots, v_{2r-1}v_{2r+1}$ present, where $r=r_1$ if $m=\frac{100\alpha}{\varepsilon}$, and $r=r_2$ if $m=\frac{10^5n}{\varepsilon^2\kappa}$. Let $Q$ denote the path $v_1v_2 \ldots v_{2r+1}$, and let $P$ denote the path $v_{2r+1}v_{2r+2} \ldots v_nv_1$. Note that in the subgraph $G[Q]$, the pair $(v_1,v_{2r+1})$ is $0$-dense in the interval $[r,2r]$. We will now show that the same pair is $r/2$-dense in the interval $[m - 2r,n]$ in the graph $G[P]$. Observation \ref{obs:combining} then implies that $G$ contains cycles of all lengths from $m$ to $n$.

In order to show that $(v_1,v_{2r+1})$ is $r/2$-dense in the interval $[m-2r,n]$ in the graph $G[P]$, a simple application of either Lemma~\ref{lem:jumpwithdegree} or Lemma~\ref{lem:jumpwithzigzag} suffices, depending on the where the minimum $m$ is attained. Indeed, let $G' := G[P]$ and note that it has minimum degree at least $\delta'\geq \delta(G) - (2r-1) \geq \kappa - \eps \alpha > (1-\eps)\kappa$ and $\alpha(G') \leq \alpha$. Assume first that $m=\frac{10^5n}{\varepsilon^2\kappa}\leq\frac{100\alpha}{\varepsilon}$, which implies that $20n/\delta'\leq 20n/(1-\eps)\kappa < r/2$. Therefore, we can apply Lemma \ref{lem:jumpwithdegree} to find a $v_{2r+1}v_1$-path $P'$ in $G'$ such that $|P|-r/2 \leq |P| - 20n/\delta' \leq |P'| < |P|$. Further, we can repeat this on $P'$ and continue applying Lemma \ref{lem:jumpwithdegree} in such a manner, until we are left with a path on at most $\frac{10^5n}{\varepsilon^2\kappa}-2r$ vertices.
Note that we can do this, since for every application of the lemma, we will have that the path will be of size at least $\frac{10^5n}{\varepsilon^2\kappa}-2r\geq \frac{10^5n}{\varepsilon^2\kappa}-\frac{200n}{\kappa} \geq 20 n/\delta'$.
This implies that $(v_1,v_{2r+1})$ is $r/2$-dense in the interval $\left[\frac{10^5n}{\varepsilon^2 \kappa}-2r,n \right]$ as desired. 

Assume now that $\frac{10^5n}{\varepsilon^2\kappa}\geq\frac{100\alpha}{\varepsilon}$. Then, we can apply Lemma \ref{lem:jumpwithzigzag} to find a $v_{2r+1}v_1$-path $P'$ in $G'$ such that $|P|-r/2 \leq |P| - \lceil 20 \alpha^2/|P| \rceil \leq |P'| < |P|$. We can repeat this on $P'$ and iteratively apply the same lemma in such a way, until we are left with a path $P_0$ with at most $\frac{100\alpha}{\varepsilon}-2r= \frac{100\alpha}{\varepsilon}-\varepsilon\alpha>\frac{99\alpha}{\varepsilon} $ vertices, so that for all previous paths $P$ in this iteration we have $\lceil 20\alpha^2/|P| \rceil < r/2$. This shows that $(v_1,v_{2r+1})$ is $r/2$-dense in the interval $\left[\frac{100\alpha}{\varepsilon}-2r,n \right]$ as desired.

\subsection*{Lower range: $3$ to $\max(\eps \alpha/2000,n/\alpha)$}

Now we deal with the lower range. Let us first show that $G$ contains the three smallest cycles.
\begin{claim*}
$G$ contains a $C_3$, a $C_4$ and a $C_5$.
\end{claim*}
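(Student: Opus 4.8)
I would prove the three cases separately, in increasing order of difficulty, each by contradiction.

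\emph{Triangle.} Pick any vertex $v$. Since $|N(v)|\ge\delta(G)\ge\kappa\ge(1+\eps)\alpha>\alpha$, the set $N(v)$ is not independent, so it spans an edge $xy$, and $vxy$ is a $C_3$.

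\emph{$C_4$.} Suppose $G$ has no $C_4$. Then every neighbourhood induces a graph of maximum degree at most $1$ (a vertex $a\in N(w)$ with two neighbours $b,c\in N(w)$ gives the $4$-cycle $wbac$), hence a matching; and every edge $uv$ satisfies $|N(u)\cap N(v)|\le 1$ (two common neighbours $w_1,w_2$ would give $uw_1vw_2$). Fix an edge $uv$ and set $A:=N(u)\setminus(N(v)\cup\{v\})$ and $B:=N(v)\setminus(N(u)\cup\{u\})$. Then $A,B$ are disjoint, $|A|,|B|\ge\delta(G)-2$, there is no $A$--$B$ edge (an edge $ab$ with $a\in A$, $b\in B$ would give the $4$-cycle $uabv$), and $G[A]\subseteq G[N(u)]$, $G[B]\subseteq G[N(v)]$ are matchings. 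Consequently
\[
\alpha(G)\ \ge\ \alpha(G[A])+\alpha(G[B])\ \ge\ \tfrac{|A|}{2}+\tfrac{|B|}{2}\ \ge\ \delta(G)-2\ \ge\ (1+\eps)\alpha-2,
\]
which contradicts $\eps\alpha>2$ (true since $\alpha$ is large in terms of $\eps$). Hence $G$ has a $C_4$.

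\emph{$C_5$.} This is the hard case, and I would build it on one simple mechanism: a $4$-cycle $q_1q_2q_3q_4q_1$ one of whose edges, say $q_1q_2$, lies in a triangle $q_1zq_2$ with $z\notin\{q_1,q_2,q_3,q_4\}$, yields the $5$-cycle $zq_1q_4q_3q_2z$. To produce such a configuration I would invoke the partitioning \Cref{lem:partition}, which gives $X\subseteq V(G)$ and an edge set $E\subseteq E(G[X])$ with $|E|\ge\frac{\kappa-\alpha}{8}n\ge\frac{\eps\alpha}{8}n$ such that every $e=xy\in E$ has a vertex $z_e\notin X$ with $xz_ey$ a triangle. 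Work in the auxiliary graph $H:=(X,E)$. Since $|E(H)|\ge\frac{\eps\alpha}{8}n>|X|$ for $\alpha$ large, $H$ contains a cycle; a $C_5$ in $H$ is already a $C_5$ of $G$, and from a cycle of length $3$ or $4$ in $H$ one obtains a $C_5$ of $G$ by replacing a single $E$-edge $xy$ of it by the path $xz_{xy}y$ (legitimate, as $z_{xy}\notin X$ is a new vertex). It therefore remains only to exclude the possibility that $H$ has girth at least $6$; since a $C_4$-free graph on $N$ vertices has $O(N^{3/2})$ edges, this is automatic once $\frac{\eps\alpha}{8}n\gg n^{3/2}$, i.e.\ once $\alpha\gtrsim\sqrt{n}/\eps$, and in the complementary regime $\alpha=O(\sqrt{n}/\eps)$ one instead exploits that $\kappa$ is comparable to $\sqrt n$, using \Cref{lem:cyclewithtriangles} to locate a short cycle directly (or the abundance of triangle-bearing edges from \Cref{lem:partition} together with the $C_4$ already found in $G$).

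The step I expect to be the genuine obstacle is exactly this last one: forcing the triangle-bearing edges supplied by \Cref{lem:partition} to create a \emph{short} cycle rather than only long ones. Everything else — the triangle, the $C_4$, and the ``replace an edge by its triangle'' trick — is routine once the correct auxiliary objects are in place (the one-sided neighbourhoods $A,B$ for the $C_4$, and the dense triangle-edge set $E$ for the $C_5$).
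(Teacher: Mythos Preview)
Your arguments for $C_3$ and $C_4$ are correct and essentially coincide with the paper's (the paper uses the symmetric-difference $(I_u\triangle I_v)\setminus\{u,v\}$ where you use $A\cup B$, but this is the same idea).

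For $C_5$, however, there is a genuine gap, and you identify it yourself. Two concrete problems:

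First, a small slip: replacing \emph{one} edge of a $C_3$ in $H$ by its triangle-path yields a $C_4$, not a $C_5$. To get a $C_5$ from a triangle in $H$ you would need to replace two of its edges, and then you must argue that the two attachment vertices $z_e,z_{e'}\notin X$ are distinct.

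Second, and more seriously, the case $\alpha=O(\sqrt n/\eps)$ is not handled. Neither of your suggested fallbacks works: \Cref{lem:cyclewithtriangles} only produces a cycle of length roughly $n/\kappa$, which is $\Theta(\sqrt n)$ in this regime rather than $5$; and the $C_4$ you already found in $G$ has no reason to contain an $E$-edge, so it cannot be combined with the triangle-bearing edges from \Cref{lem:partition}. The density bound $|E|\ge\eps\alpha n/8$ is simply too weak to force $\mathrm{girth}(H)\le5$ when $\alpha\ll\sqrt n$.

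The paper avoids this entirely by extending the $C_4$ argument one step further. If $G$ has no $C_5$ then $G[N(v)]$ contains no $P_4$, so each $N(v)$ has an independent set $I_v$ of size at least $|N(v)|/3\ge(1+\eps)\alpha/3$. Now pick any $v$ and three disjoint edges $x_1y_1,x_2y_2,x_3y_3$ inside $N(v)$ (possible since $|N(v)|>\alpha+5$). A common neighbour $z\ne v$ of two of the $x_i$'s, or an edge between $N(x_i)$ and $N(x_j)$ for $i\ne j$, would create a $C_5$ through $v$; hence $I_{x_1}\cup I_{x_2}\cup I_{x_3}$ is an independent set of size exceeding $\alpha$. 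This is elementary and uniform in $\alpha$, whereas your density approach via \Cref{lem:partition} is inherently regime-dependent and breaks down for small $\alpha$.
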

\begin{proof}
Note that $G$ contains $C_3$ since $\delta(G)\geq \kappa\geq \alpha+1$, so the neighbourhood of a vertex necessarily spans an edge. Suppose now for sake of contradiction that $G$ does not contain a $C_4$. Then, it must be that for every vertex $v$, the graph induced by its neighbourhood $G[N(v)]$ has maximum degree $1$ - indeed, otherwise it contains a path on three vertices, which together with $v$ forms a $C_4$. Moreover, this implies that $N(v)$ contains an independent set $I_v$ of size at least $|N(v)|/2 \geq \kappa/2 \geq (1+\eps)\alpha/2$. Now, take two adjacent vertices $u,v$ in $G$. Since $G$ contains no $C_4$, it must be that $|I_u \cap I_v| \leq 1$ and thus, $(I_u \Delta I_v) \setminus \{u,v\}$ has at least $(1+\eps)\alpha - 3 > \alpha$ vertices. To finish, note that there can be no edge between $I_u\setminus \{v\}$ and $I_v\setminus\{u\}$ since together with $uv$ it would form a $C_4$. Hence, the set $(I_u \Delta I_v) \setminus \{u,v\}$ is an independent set of size larger than $\alpha$, which contradicts the assumption on $G$.

Finally, suppose for sake of contradiction that $G$ contains no $C_5$. Much like before, note that it must be that for every vertex $v$, $G[N(v)]$ has no path on four vertices since this together with $v$ forms a $C_5$. Therefore, $N(v)$ contains an independent set $I_v$ of size at least $|N(v)|/3 \geq \kappa/3 \geq (1+\eps)\alpha/3$. Now, take a vertex $v$, and let $x_1y_1, x_2y_2, x_3y_3$ be disjoint edges contained in $N(v)$ - note these exist since $|N(v)| \geq \kappa \geq \alpha + 7$. Consider also the neighbourhoods $N(x_1),N(x_2), N(x_3)$ and note that they must be disjoint (except for $v$) -- indeed, if e.g., $z \in N(x_1) \cap N(x_2) $ then $vy_1x_1zx_2v$ is a $C_5$ (see Figure \ref{fig:C5} for an illustration). Note also that there cannot exist an edge $zz'$ with $z \in N(x_i), z' \in N(x_j)$ for some $i \neq j$ - indeed, then $vx_izz'x_jv$ is a $C_5$. Concluding, note that it must be that $I_{x_1} \cup I_{x_2} \cup I_{x_3}$ is an independent set and has size at least $|I_{x_1}|+|I_{x_2}|+|I_{x_3}| > \alpha$, which is a contradiction.
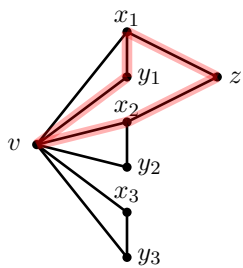
\begin{figure}[h]
\begin{tikzpicture}[scale = 0.6]
\node[scale = 3] at (-2,0) {$.$};
\node[scale = 0.9] at (-2.5,0) {$v$};
\node[scale = 3] at (0,-2.5) {$.$};
\node[scale = 3] at (0,-1.5) {$.$};
\node[scale = 3] at (0,-0.5) {$.$};
\node[scale = 3] at (0,0.5) {$.$};
\node[scale = 3] at (0,1.5) {$.$};
\node[scale = 3] at (0,2.5) {$.$};
\draw[line width=1pt] (-2,0) to (0,-2.5);
\draw[line width=1pt] (-2,0) to (0,-1.5);
\draw[line width=1pt] (-2,0) to (0,-0.5);
\draw[line width=1pt] (-2,0) to (0,0.5);
\draw[line width=1pt] (-2,0) to (0,1.5);
\draw[line width=1pt] (-2,0) to (0,2.5);
\draw[line width=1pt] (0,-2.5) to (0,-1.5);
\draw[line width=1pt] (0,-0.5) to (0,0.5);
\draw[line width=1pt] (0,2.5) to (0,1.5);
\node[scale = 0.9] at (0,2.8) {$x_1$};
\node[scale = 0.9] at (0,0.8) {$x_2$};
\node[scale = 0.9] at (0,-1.1) {$x_3$};
\node[scale = 0.9] at (0.5,1.5) {$y_1$};
\node[scale = 0.9] at (0.5,-0.5) {$y_2$};
\node[scale = 0.9] at (0.5,-2.5) {$y_3$};

\node[scale = 3] at (2,1.5) {$.$};
\node[scale = 0.9] at (2.4,1.5) {$z$};
\draw[line width=1pt] (2,1.5) to (0,2.5);
\draw[line width=1pt] (2,1.5) to (0,0.5);
\draw[opacity = 0.3, red, line width=4pt] (-2,0) to (0,1.5);
\draw[opacity = 0.3, red, line width=4pt] (-2,0) to (0,0.5);
\draw[opacity = 0.3, red, line width=4pt] (0,1.5) to (0,2.5);
\draw[opacity = 0.3, red, line width=4pt] (2,1.5)to (0,2.5);
\draw[opacity = 0.3, red, line width=4pt] (2,1.5) to (0,0.5);
\end{tikzpicture}
    \centering
    \caption{An illustration of the cycle $vy_1x_1zx_2v$.}
    \label{fig:C5}
\end{figure}
\end{proof}
\noindent For the remaining cycle lengths, let us assume first that $n/\alpha \geq \eps \alpha/2000$, thus implying that $n \geq \eps \alpha^2/2000$. Showing that $G$ contains all cycles of lengths between $6$ and $n/\alpha$ boils down to the study of cycle-complete Ramsey numbers. Namely, the cycle-complete Ramsey number $r(C_\ell,K_s)$ is the smallest number $N$ such that every graph on $N$ vertices either contains a copy of $C_\ell$ or an independent set of size $s$. The following result of Erd\H{o}s, Faudree, Rousseau and Schelp \cite{erdos1978cycle}, along with a more recent result by Keevash, Long and Skokan \cite{keevash2021cycle} cover the mentioned range of cycle lengths we need.

\begin{thm}[\cite{erdos1978cycle}]\label{thm:erdos cycle-complete}
Let $\ell\geq 3$ and $s\geq 2$. Then $r(C_\ell, K_s) \leq\left((\ell-2)(s^{1/x}+2)+1\right)(s-1)$, where $x=\lfloor \frac{\ell-1}{2}\rfloor$.
\end{thm}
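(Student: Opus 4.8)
The final statement to prove is Theorem~\ref{thm:erdos cycle-complete}, the cycle--complete Ramsey number bound $r(C_\ell,K_s)\le\big((\ell-2)(s^{1/x}+2)+1\big)(s-1)$ with $x=\lfloor(\ell-1)/2\rfloor$. This is a classical result of Erd\H{o}s, Faudree, Rousseau and Schelp, and the paper invokes it as a black box, so what I would write here is a proof sketch of that Ramsey bound rather than something depending on the machinery built earlier in the paper.

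\textbf{Plan.} The strategy is the standard ``dense graph with no short independent set has a long cycle, in fact a cycle of every length up to some threshold'' argument. Let $N=\big((\ell-2)(s^{1/x}+2)+1\big)(s-1)$ and suppose $H$ is an $N$-vertex graph with $\alpha(H)<s$; I must produce a copy of $C_\ell$. First I would pass to the Ramsey--type degree condition: since $\alpha(H)\le s-1$, a greedy/averaging argument (iteratively pulling out a vertex of small degree and using that the remaining graph still has no independent set of size $s$) shows $H$ contains a subgraph $H'$ with minimum degree at least $\delta:=N/(s-1)-(\ell-2)\ge (\ell-2)(s^{1/x}+1)+1$ — roughly, $|V(H')|\ge N-(s-1)(\ell-2)$ vertices each of degree $\ge (\ell-2)s^{1/x}$ up to lower order terms. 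The precise bookkeeping is chosen exactly so the numbers line up with the claimed $N$.

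\textbf{Main step.} Inside $H'$ I would run a BFS/breadth-first layering argument from an arbitrary vertex $v$: because $\alpha(H')<s$ and the minimum degree is large, the ball of radius $x=\lfloor(\ell-1)/2\rfloor$ around $v$ must contain more than $\delta/(s^{1/x})\cdot(\text{something})$ vertices; quantitatively, if each of the $x$ layers grew by a factor less than $s^{1/x}$ we would get a vertex set of size $<\delta$ at distance $x$, contradicting a counting bound forcing large neighbourhoods. This forces two vertices $u,w$ at distance roughly $\ell/2$ with many internally disjoint short $u$--$w$ paths, or more directly a theta-graph / long cycle. The cleanest route is the classical lemma: a graph with minimum degree $d$ and no independent set of size $s$ contains cycles of all lengths between (say) $3$ and $d/s^{1/x}\cdot\text{const}$, proved by taking a longest path, using the endpoint's neighbours on the path (Erd\H{o}s--Gallai style), and iterating the expansion estimate to show the path — hence a spanning-ish cycle — is long. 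One then checks the length $\ell$ falls in the guaranteed interval, using $\delta\ge (\ell-2)(s^{1/x}+1)+1$.

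\textbf{Expected obstacle.} The delicate part is the expansion/counting estimate that converts the minimum-degree-plus-no-large-independent-set hypothesis into the existence of a path (or cycle) of length at least $\ell$: one needs the factor $s^{1/x}$ to appear with exactly the right exponent $x=\lfloor(\ell-1)/2\rfloor$, which is what makes the bound on $N$ essentially best possible for $\ell$ around a fixed value. I would handle it by induction on $\ell$ (or on $x$), peeling off two vertices from the end of a longest path at each step and tracking how the independence number and minimum degree of the ``remaining'' structure decrease, so that after $x$ steps the accumulated multiplicative loss is exactly $s^{1/x}$ raised to the number of steps taken. Since the paper only needs this as a citation, for the write-up I would state the reduction to the min-degree lemma, give the one-paragraph longest-path argument, and refer to \cite{erdos1978cycle} for the full optimization of constants.

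\begin{proof}[ of Theorem~\ref{thm:erdos cycle-complete} (sketch)]
Write $x=\lfloor(\ell-1)/2\rfloor$ and $N=\big((\ell-2)(s^{1/x}+2)+1\big)(s-1)$, and let $H$ be any graph on $N$ vertices with $\alpha(H)\le s-1$; we find a copy of $C_\ell$. Repeatedly deleting a vertex of minimum degree, as long as that degree is below $D:=(\ell-2)(s^{1/x}+1)+1$, removes at most $N/(s-1)-$const many vertices before fewer than $s$ remain is impossible because $\alpha(H)<s$ keeps the graph dense; a short calculation with the value of $N$ shows the process terminates with a nonempty subgraph $H'$ of minimum degree at least $D$ and on at least $(\ell-2)(s-1)$ vertices. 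Now take a longest path $P=u_0u_1\cdots u_t$ in $H'$. All neighbours of $u_0$ lie on $P$, and among $u_1,\dots,u_t$ the set of indices $i$ with $u_0u_i\in E(H')$ together with the corresponding $u_{i-1}$'s, being of size $\ge D> s$, contains an edge or a suitable rotation, which by the Erd\H{o}s--Gallai rotation--extension technique and the bound $\alpha(H')<s$ forces $t\ge \ell-1$ and in fact the existence of cycles of every length in $[3,t+1]\supseteq[3,\ell]$; the exponent $1/x$ enters through iterating the neighbourhood-expansion estimate $x$ times along $P$. In particular $C_\ell\subseteq H'\subseteq H$, as required. The optimization of the constants is carried out in~\cite{erdos1978cycle}.
\end{proof}
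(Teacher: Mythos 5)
The paper does not prove this theorem; it cites it from Erd\H{o}s--Faudree--Rousseau--Schelp and uses it as a black box, so there is no paper proof to compare against, and you correctly flag that. What you offer instead is a sketch of the EFRS argument, and that sketch has two genuine gaps.

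First, in the degeneracy reduction your justification for why the deletion process terminates — ``$\alpha(H)<s$ keeps the graph dense'', and the count ``at most $N/(s-1)-$const many vertices'' — is not an argument and is not the right bound. The correct reasoning is that the vertices deleted while their current degree was below $D$ induce a $D$-degenerate, hence $D$-colourable, subgraph, so if $k$ such vertices were deleted then $\alpha(H)\ge k/D$; with $\alpha(H)\le s-1$ this gives $k\le (s-1)D$, and since $N-(s-1)D=(\ell-2)(s-1)>0$ the process leaves a nonempty $H'$ of minimum degree $\ge D$. You land on the correct final count $(\ell-2)(s-1)$, but the intermediate reasoning needs to be replaced. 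Second, and more serious: the step from high minimum degree plus $\alpha<s$ to an actual copy of $C_\ell$ is asserted rather than argued. Rotation--extension on a longest path gives a long path and a long cycle, but it does not give ``cycles of every length in $[3,t+1]$'' without a further pancyclicity-type argument that you have not supplied; and the exponent $x=\lfloor(\ell-1)/2\rfloor$, which is the whole point of the bound, appears nowhere in your rotation paragraph. Your closing remark that it ``enters through iterating the neighbourhood-expansion estimate $x$ times along $P$'' is disconnected from the argument you just gave and substitutes for the actual BFS/covering computation in EFRS, which is where the $s^{1/x}$ growth and the separation into short and long $\ell$ really happen. As written, this is a plausible-sounding outline of where a proof might go rather than a proof, and the central step is missing.
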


\noindent The next result by Keevash, Long and Skokan gives the precise behaviour of cycle-complete Ramsey numbers in a wide range of parameters, and proves a conjecture from \cite{erdos1978cycle}.

\begin{thm}[\cite{keevash2021cycle}]\label{precise cycle-complete}
There exists $C \geq 1$ so that $r(C_\ell , K_s) = (\ell - 1)(s- 1) + 1$ for $s \geq 3$ and $\ell\geq C \frac{\log s}{\log\log s}$.
\end{thm}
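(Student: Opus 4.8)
The statement is an exact equality, so the plan is to establish the matching lower and upper bounds on $r(C_\ell,K_s)$. The lower bound $r(C_\ell,K_s)\ge (\ell-1)(s-1)+1$ is the easy part: the disjoint union of $s-1$ cliques, each of order $\ell-1$, has $(\ell-1)(s-1)$ vertices, independence number exactly $s-1$, and no cycle of length $\ell$ since each component has only $\ell-1$ vertices. All the real work is in the upper bound: one must show that every graph $G$ on $N:=(\ell-1)(s-1)+1$ vertices with $\alpha(G)\le s-1$ contains a copy of $C_\ell$. I would argue by contradiction, assuming $G$ is $C_\ell$-free and producing an independent set of size $s$.

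The first move is to reduce to the case that $G$ is $2$-connected, via a block decomposition together with induction on $s$: if $G$ is not $2$-connected, write it as the union of two proper subgraphs meeting in at most one vertex, and show some piece inherits enough vertices relative to its independence number to contain $C_\ell$ by induction, or is a clique of order $\ge\ell$. This reduction is delicate precisely because $N$ exceeds the extremal value by only $1$, so the disjoint-cliques construction is exactly the configuration the reduction must rule out, and the equality cases (a piece with $|V|=(\ell-1)\alpha$ exactly) must be tracked carefully. I would also keep in reserve the remark that if along the way a subgraph appears whose connectivity is at least $(1+\eps)$ times its independence number, then it is already pancyclic by Theorem~\ref{thm:main} (equivalently, Chv\'atal--Erd\H{o}s yields a long cycle which can then be shortened), and in particular contains $C_\ell$ since $\ell\le N$.

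The heart of the argument is then, with $G$ $2$-connected and $C_\ell$-free, to take a longest cycle $C$ of length $c$ (note $c\ne\ell$) and split into two cases. If $c<\ell$, then $C$ is a short longest cycle, so by the classical structure theory of longest cycles in $2$-connected graphs — attachment points on $C$ of each component of $G-V(C)$ are spread out, and chords cannot be too short on pain of producing a longer cycle — the components of $G-V(C)$ together with the arcs of $C$ decompose $G$ into pieces each of order essentially at most $c<\ell$; picking one vertex from each component of $G-V(C)$ gives an independent set, and a counting argument shows that with at most $s-1$ such components and arcs shorter than $\ell$ one simply cannot accommodate $N=(\ell-1)(s-1)+1$ vertices, a contradiction (or else an independent set of size $s$ appears directly). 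If $c>\ell$, so $c\ge\ell+1$, then every chord of $C$, and more generally every short $G$-path with both endpoints on $C$ or passing through $G-V(C)$, reroutes $C$ into a shorter cycle; $C_\ell$-freeness forces all these reroutings to avoid length exactly $\ell$ while not exceeding $c$, which pins down the chord/ear structure of $C$ so rigidly that alternate vertices along long chordless arcs of $C$ form an independent set of size $>s-1$. In both cases one also exploits that a path on $\ell$ vertices with adjacent endpoints is a $C_\ell$ — so no vertex can have $\ell-1$ of its neighbours spanning a path — which bounds the path structure inside neighbourhoods and feeds back into the independence bound.

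The main obstacle, and the reason for the hypothesis $\ell\ge C\log s/\log\log s$ rather than just $\ell\ge 3$, is that one needs to control cycle lengths \emph{exactly}, not merely up to a constant. Theorem~\ref{thm:erdos cycle-complete} already gives $r(C_\ell,K_s)=O(\ell s)$ once $\ell$ is, say, at least logarithmic in $s$, and this could serve as a black box to bound the circumference of the relevant subgraphs; but it is off by a constant factor (the correction term $s^{1/x}+2$ in Theorem~\ref{thm:erdos cycle-complete} never drops to $1$), and squeezing it down to the precise value $(\ell-1)(s-1)+1$ — and pushing the threshold all the way to $\log s/\log\log s$, which is essentially necessary since for constant or slowly growing $\ell$ the number $r(C_\ell,K_s)$ is superlinear in $s$ — requires the full extremal casework above. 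I expect the heaviest part to be the $c>\ell$ case: converting ``rigid chord/ear structure'' into an independent set of the exact size $s$ calls for a careful iterative rerouting argument that tracks which cycle lengths remain forbidden after each reroute and shows the surviving structure to be so close to a disjoint union of copies of $K_{\ell-1}$ that its independence number must reach $s$.
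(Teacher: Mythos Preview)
This theorem is not proved in the paper at all: it is quoted verbatim as a black-box result from Keevash, Long and Skokan \cite{keevash2021cycle}, and is used only as an input in the lower-range case of the proof of Theorem~\ref{thm:main}. There is therefore no proof in the paper to compare your sketch against.

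That said, your proposal has a genuine structural problem. You invoke Theorem~\ref{thm:main} (pancyclicity under $\kappa\ge(1+\eps)\alpha$) as a tool inside your argument, but in the paper the logical dependence runs the other way: Theorem~\ref{precise cycle-complete} is one of the ingredients used to establish Theorem~\ref{thm:main}. Calling the main theorem to prove one of its own lemmas would be circular. Beyond the circularity, the sketch is only a high-level outline of the standard shape such arguments take (disjoint-cliques lower bound, reduction to $2$-connected, longest-cycle dichotomy); the step you yourself flag as the heaviest --- converting ``rigid chord/ear structure'' in the $c>\ell$ case into an independent set of exactly the right size, at the sharp threshold $\ell\ge C\log s/\log\log s$ --- is precisely the substance of the Keevash--Long--Skokan paper, and your outline supplies no mechanism for it.
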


\noindent Note that since $G$ contains no independent set of size larger than $\alpha$ and $n \geq \eps \alpha^2/2000$ (and by assumption $\alpha$ is sufficiently large in terms of $\varepsilon$), Theorem~\ref{thm:erdos cycle-complete} implies the existence of a cycle of length $\ell$ for every $\ell\in[6,\log \alpha]$, while Theorem~\ref{precise cycle-complete} covers the range of $[\log \alpha,n/\alpha]$.

\noindent Now assume that $\eps \alpha/2000 > n/\alpha$, implying that $\alpha > 40\sqrt{n/\eps}$. We need to find all cycles from $6$ to $\eps \alpha/2000$. For this, we use the following classic result by Bondy and Simonovits.
\begin{thm}[\cite{BS}]\label{BSthm} Let $G$ be an $n$-vertex graph with $e(G) \geq \max(20ln^{1+1/l},200nl)$. Then, $G$ contains a cycle of length $2l$.
\end{thm}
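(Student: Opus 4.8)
Since the statement is the classical even-cycle theorem of Bondy and Simonovits, I would reprove it along their original breadth-first-search lines. \textbf{Reduction to large minimum degree.} Because $e(G)\ge\max(20ln^{1+1/l},200nl)$, the average degree of $G$ is at least $\max(40ln^{1/l},400l)$; iteratively deleting a vertex whose degree is at most half the current average degree never lowers the average degree and never exhausts the edge set, so it leaves a subgraph $H$ with minimum degree $d:=\delta(H)\ge\max(20ln^{1/l},200l)$, and by passing to the component of a chosen vertex we may assume $H$ is connected, on $n'\le n$ vertices. It suffices to find a $C_{2l}$ in $H$, so assume for contradiction that $H$ has no $C_{2l}$.

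\textbf{BFS and the target inequality.} Fix $v_0\in V(H)$, let $L_0=\{v_0\},L_1,L_2,\dots$ be its BFS layers, and fix a BFS tree $T$. The plan is to show that the absence of $C_{2l}$ forces the layers to grow geometrically all the way down to depth $l$:
\[|L_{i+1}|\ \ge\ \frac{d}{4l}\,|L_i|\qquad\text{for every }0\le i\le l-1 .\]
Granting this, $d/(4l)\ge\max(5n^{1/l},50)>1$, so $L_0,\dots,L_l$ are all non-empty and $|L_l|\ge (d/(4l))^l\ge (5n^{1/l})^l=5^l n>n\ge n'$, which contradicts $|L_l|\le |V(H)|$; hence $H$ does contain a $C_{2l}$.

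\textbf{The growth estimate — the crux.} To prove the displayed inequality, suppose some layer $L_i$ with $i\le l-1$ violates it. Summing degrees over $L_i$, the missing forward edges must be explained by one of: (a) many edges inside $L_i$; (b) many edges from $L_i$ up to $L_{i-1}$ beyond the $|L_i|$ tree edges; or (c) a single vertex of $L_{i+1}$ absorbing a constant fraction of $E(L_i,L_{i+1})$, i.e. a vertex of very large degree into $L_i$. In each case one extracts a cycle of length \emph{exactly} $2l$: attach to the endpoints of a surplus edge (or to a suitable pair of edges through the heavy vertex, or to a long path produced inside $L_i$ via Erd\H{o}s--Gallai) the two $T$-paths back to $v_0$, and delete their common initial segment down to the least common ancestor; the resulting cycle has even length governed by $i$ and by the level of that ancestor, and since the surplus edges are numerous the relevant ancestor-levels take many values, so pigeonhole yields a cycle of length precisely $2l$ (where the arithmetic does not land on $2l$ directly one instead combines these short even cycles through $v_0$, or re-runs BFS from the heavy vertex, to reach length $2l$). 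This contradicts $C_{2l}$-freeness.

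\textbf{Main obstacle.} Step~3 is where essentially all the difficulty lies: producing \emph{some} even cycle out of surplus edges is trivial, but forcing one of the \emph{prescribed} length $2l$ — in particular ruling out a single vertex of the next layer that swallows most of the forward edges — is exactly the delicate bookkeeping of path lengths modulo $2l$ that constitutes the Bondy--Simonovits argument. Matching the constants to the hypothesis is then routine: the term $20ln^{1+1/l}$ is what pushes the growth ratio above $5n^{1/l}$, while the term $200nl$ keeps the ratio above $1$ in the regime where $n^{1/l}$ is close to $1$ (that is, $l$ of order $\log n$).
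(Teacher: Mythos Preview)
The paper does not prove this statement at all: Theorem~\ref{BSthm} is simply quoted from Bondy and Simonovits \cite{BS} as a black box and then applied. So there is no ``paper's own proof'' to compare your attempt against; in the context of this paper the correct thing to do is cite \cite{BS} and move on.

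As for your sketch itself: Steps 1 and 2 (passing to a subgraph of large minimum degree, setting up BFS layers, and deriving a contradiction from a geometric growth estimate) are the genuine Bondy--Simonovits skeleton, and your final arithmetic with $(d/4l)^l > n$ is fine. But your Step~3 is not a proof --- it is a description of the difficulty. The actual Bondy--Simonovits argument does not proceed by the three-way case split you describe; the key lemma is a structural statement (in their language, about graphs whose edges are coloured by two colours such that every vertex is incident to edges of both colours, forcing long monochromatic paths) which is then applied to the bipartite graph between $L_i$ and $L_{i+1}$, with the two ``colours'' coming from a partition of $L_i$ by subtrees of the BFS tree. Your sentence ``where the arithmetic does not land on $2l$ directly one instead combines these short even cycles through $v_0$, or re-runs BFS from the heavy vertex, to reach length $2l$'' is exactly the gap: getting the length to be \emph{exactly} $2l$ is the whole content of the theorem, and no amount of re-running BFS or pigeonholing on ancestor levels will deliver it without the colouring lemma or an equivalent device. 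So what you have written is a correct outline of the easy half and an honest acknowledgment that the hard half is missing.
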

\noindent We can now utilise this together with Lemma \ref{lem:partition} to get the desired cycle. Indeed, apply this lemma to $G$ to obtain a set $X$ and edge-set $E$ of edges contained in $X$, 
such that $|E|\geq \frac{\kappa-\alpha}{8} \cdot n\geq \varepsilon\alpha n/8$, and for every edge $(x,y)\in E$ there exists $z\in V(G)-X$ such that $x,y$ and $z$ form a triangle. Let $G' := (X,E)$ be the graph consisting of these edges. Observe that it is sufficient for us to show that for all $3 \leq \ell \leq \eps \alpha/2000$, there is a cycle of length $2\ell$ in $G'$ - indeed, such a cycle can then be transformed into a cycle of length $2\ell+1$ in $G$ by substituting an edge $xy$ of the cycle by the path $xzy$ which is guaranteed to exist by Lemma \ref{lem:partition}. Finally, we find these even cycles in $G'$ by applying Theorem \ref{BSthm}, which gives cycles of lengths $2\ell$, for any $\ell$ such that $\max(200nl,20ln^{1+1/l}) \leq \eps \alpha n/8$. Since $\alpha > 40\sqrt{n/\eps}$, this holds for all $\ell \in \left[3,\eps \alpha/2000 \right]$, which completes the proof.

\subsection*{Middle range: $\max \left(\eps \alpha/2000,n/\alpha \right)$ to $\min\left(\frac{10^5n}{\varepsilon^2\kappa},\frac{100\alpha}{\varepsilon}\right)$}
To finish the proof of Theorem \ref{thm:main}, we will now consider the middle range of cycle lengths. First, observe that we may assume that $\max(\eps \alpha/2000,n/\alpha)<\min\left(\frac{10^5n}{\varepsilon^2\kappa},\frac{100\alpha}{\varepsilon}\right)$, as otherwise this range is empty.
Hence we have that $n/\alpha < 100 \alpha/\varepsilon$, which is equivalent to $\alpha > \frac{1}{10}\sqrt{\varepsilon n}$. Further, we have $\eps \alpha/2000 < \frac{10^5n}{\varepsilon^2\kappa}$, and since we have $\kappa>\alpha$, this gives $\alpha < 10^5\sqrt{n/\eps^3}$.  Observe that this implies that $\alpha = \Theta_{\eps}(\sqrt{n})$. 

Now, first observe that by Lemma \ref{lem:cyclewithtriangles}, $G$ contains a $C^{2r}_{\ell}$ with $r = \varepsilon^{10}\alpha = \Theta_{\eps}(\sqrt{n})$ and with $\ell$ such that $$4r+1\leq \ell \leq \frac{n}{\kappa(G)-4r+1}+4r+2\leq\frac{n}{(1+\varepsilon/2)\alpha}+10\varepsilon^{10}\alpha\leq \frac{n}{\alpha},$$
where we used that $10^5\sqrt{n/\eps^3}>\alpha > \frac{1}{10}\sqrt{\varepsilon n}$.

Note that this cycle $C^{2r}_\ell$ can also be viewed as a $C^{r}_\ell$ by omitting some triangles. Let $P$ then be the path consisting of the first $2r+1$ vertices of this $C_\ell^{r}$ (recall that $P$ forms a $P^{r}_{2r}$), and let $P'$ be the other path inside of the cycle with the same endpoints, denoted by $x,y$ - so that $|P'| = l - 2r \geq r$. We will iteratively apply \Cref{lem:augmentingpath} to the path $P'$ inside of the graph $G'=G-(V(P)-\{x,y\})$, with parameter $r$ defined as above, 
and connectivity $\kappa'\geq \kappa-2r$.
Indeed, note that $|P'| \geq r \geq \frac{80 \alpha}{r}$, while $r  \geq \frac{80 \alpha}{r} \cdot \frac{|P'|}{\kappa' - \alpha }$ and $\kappa'>\alpha+2r$ and so, there is an $xy$-path $P''$ in $G'$ with $|P'| < |P''| \leq |P'|+r$. We can continue applying \Cref{lem:augmentingpath} to the newly obtained path inside of the same graph, each time getting a path which is by at most $r$ longer than the previous one. Note that the conditions of the lemma are still satisfied as long as the current path is of length $\frac{100\alpha}{\varepsilon}$. This implies that the pair $xy$ is $r$-dense in $[\ell-2r, 100 \alpha/\eps]$ in the graph $G'$. Now, since $xy$ is also $0$-dense in $[r,2r]$ in $G[P]$, this gives all cycle lengths in $[\ell,100 \alpha/\eps] \supseteq [n/\alpha,100 \alpha/\eps]$ by \Cref{obs:combining}, as desired. \hfill \qedsymbol

\section{Concluding remarks}\label{sec:concludingrem}
In this paper we showed that if a graph $G$ satisfies $\kappa(G)\geq (1+o(1))\alpha(G)$ then $G$ is pancyclic. Moreover, the $o(1)$ error term can be made to be $\alpha(G)^{-c}$ for some small constant $c > 0$. This extends the classic theorem of Chv\'atal and Erd\H{o}s, which states that $\kappa (G)\geq \alpha(G) $ implies that $G$ is Hamiltonian, confirming asymptotically Bondy's meta-conjecture for this celebrated result. Nevertheless, it would be very interesting to prove the Jackson-Ordaz conjecture in full generality, or at least to show that it  holds when $\kappa (G)\geq \alpha(G)+C$ for some constant $C>0$.



\end{document}